\DeclareMathOperator{\Ker}{Ker}
\newcommand{\Aut}{\operatorname{Aut}}
\numberwithin{equation}{section}
\numberwithin{figure}{section}
\numberwithin{table}{section}
\newtheorem{theorem}{Theorem}[section]
\newtheorem*{theorem*}{Theorem}
\newtheorem{lemma}[theorem]{Lemma}
\newtheorem{corollary}[theorem]{Corollary}
\newtheorem{proposition}[theorem]{Proposition}
\newtheorem{definition}[theorem]{Definition}
\newtheorem{remark}[theorem]{Remark}
\newtheorem{example}[theorem]{Example}
\title{Hopf formulae for homology of skew braces}
\author{Marino Gran}
\author{Thomas Letourmy}
\author{Leandro Vendramin}
\address[Gran]{Institut de Recherche en Mathématique et Physique , Université Catholique de Louvain, Chemin du Cyclotron 2, B-1348 Louvain-la-Neuve, Belgium}
\email{marino.gran@uclouvain.be}
\address[Letourmy]{Département de Mathéematique, Université Libre de Bruxelles, Boulevard du Triomphe, B-1050 Brussels, Belgium; and 
Department of Mathematics and Data Science, Vrije Universiteit Brussel, Pleinlaan 2, 1050 Brussel, Belgium}
\email{thomas.letourmy@ulb.be}
\address[Vendramin]{Department of Mathematics and Data Science, Vrije Universiteit Brussel, Pleinlaan 2, 1050 Brussel, Belgium}
\email{Leandro.Vendramin@vub.be}
\subjclass[2010]{Primary: 18E13, 18G50, 18E50; 20J05}
\keywords{Skew brace, Radical ring, Non-abelian homology, Hopf formulae, Central extension, Commutator}
\begin{document}

\begin{abstract}
The variety of skew braces contains several interesting subcategories as subvarieties, as for instance the varieties of radical rings, of groups and of abelian groups. In this article the methods of non-abelian homological algebra are applied to establish some new Hopf formulae for homology of skew braces, where the coefficient functors are the reflectors from the variety of skew braces to each of the three above-mentioned subvarieties. The corresponding central extensions of skew braces are characterized in purely algebraic terms, leading to some new results, such as an explicit Stallings--Stammbach exact sequence associated with any exact sequence of skew braces, and a new result concerning central series.
\end{abstract}

\maketitle 

\section{Introduction}
Skew braces appeared originally in connection with the study of set-theoretic solutions to the Yang--Baxter equation \cite{MR3647970,Rump}. Now their applications go far beyond this domain,
as they appear in several different areas; see for example \cite{MR3291816}. 

A \emph{skew brace} \cite{MR3647970} is a triple $(A,+,\circ)$, 
where $(A,+)$ and $(A,\circ)$ are groups such that
the compatibility condition 
$a\circ (b+c)=a\circ b-a+a\circ c$ holds for all $a,b,c\in A$.

Skew braces form a variety of universal algebras $\mathsf{SKB}$, and generalise at the same time groups and radical rings. Concretely, given a group $(G,\cdot)$ one can give $G$ a skew brace structure by taking $+=\cdot$ and $\circ=\cdot$. In particular, this means that the variety $\mathsf{Grp}$ of groups is a subvariety of the variety $\mathsf{SKB}$ of skew braces. On the other hand, a radical ring $(R,+,\cdot)$ is a (not necessarily commutative) ring without unit such that the operation $a\circ b=a+a\cdot b+b$ is a group operation. In this case, the triple $(R,+,\circ)$ is a skew brace. We will recall in Section \ref{sect:Radrings} a characterisation of radical rings \cite{Rump} that implies that radical rings also form a subvariety $\mathsf{RadRng}$ of the variety $\mathsf{SKB}$ of skew braces.
Since 
skew braces form a variety of $\Omega$-groups in the sense of Higgins \cite{Higgins}, hence in particular a semi-abelian category \cite{JMT} (in fact, even a strongly protomodular category \cite{Bourn, MR4561474}), it is natural 
to look at \emph{non-abelian homology} of skew braces with coefficient functors in the subvarieties $\mathsf{Grp}$ of groups, $\mathsf{Ab}$ of abelian groups, and $\mathsf{RadRng}$ of radical rings.

Indeed, in recent years there have been some relevant developments in non-abelian homological algebra (see \cite{EVdL, EGVdL, Jan-Abstract, Protoadditive, Duckerts}, for instance, and the references therein). The comonadic approach to homology theory of algebraic structures \cite{BB} turns out to be perfectly compatible with the fundamental concept of semi-abelian category, allowing one to extend and improve some classical results in group homology to a general categorical context including compact groups, crossed modules, Lie algebras and cocommutative Hopf algebras, for instance.

This article is a first step in the direction of applying the above approach to non-abelian homology to the variety of skew braces, by providing some precise descriptions of the second homology group of a skew brace in terms of a generalized Hopf formula.

In order to briefly explain this, it is useful to first recall this classical formula in the category $\mathsf{Grp}$ of groups and its relationship with the notion of central extensions. Consider the adjunction 
\begin{equation}\label{groups}
% https://q.uiver.app/#q=WzAsMyxbMCwwLCJcXHRleHRzZntBYn0iXSxbMSwwLCJcXHBlcnAiXSxbMiwwLCJcXHRleHRzZntHcnB9Il0sWzIsMCwiXFx0ZXh0c2Z7YWJ9IiwyLHsib2Zmc2V0IjozfV0sWzAsMiwiVSIsMix7Im9mZnNldCI6Mn1dXQ==
\begin{tikzcd}
	{\textsf{Ab}} & \perp & {\textsf{Grp}}
	\arrow["U"', shift right=2, from=1-1, to=1-3]
	\arrow["{\textsf{ab}}"', shift right=3, from=1-3, to=1-1]
\end{tikzcd}
% \xymatrix@=20pt{ {\mathsf{Ab} \, } \ar@<-1ex>[r]_-{U}^-{\perp} & {\, \mathsf{Grp} \, }
% \ar@<-2ex>[l]_-{\mathsf{ab}}}
\end{equation}
where $\mathsf{ab} \colon \mathsf{Grp} \rightarrow \mathsf{Ab}$ is the classical abelianisation functor sending a group $G$ to the quotient $\mathsf{ab}(G) = \frac{G}{[G,G]}$ of $G$ by its derived subgroup $[G,G]$. Given a free presentation of a group $G$
\[
% https://q.uiver.app/#q=WzAsNSxbMSwwLCJLIl0sWzIsMCwiRiJdLFszLDAsIkciXSxbNCwwLCIwIl0sWzAsMCwiMCJdLFswLDEsIiIsMCx7InN0eWxlIjp7InRhaWwiOnsibmFtZSI6Im1vbm8ifX19XSxbMSwyLCJmIiwwLHsic3R5bGUiOnsiaGVhZCI6eyJuYW1lIjoiZXBpIn19fV0sWzIsM10sWzQsMF1d
\begin{tikzcd}
	0 & K & F & G & 0
	\arrow[from=1-1, to=1-2]
	\arrow[tail, from=1-2, to=1-3]
	\arrow["f", two heads, from=1-3, to=1-4]
	\arrow[from=1-4, to=1-5]
\end{tikzcd} 
\]
%$$ \xymatrix{0 \ar[r] &{K \, } \ar@{>->}[r] & F \ar@{->>}[r]^f & G \ar[r] & 0, }$$
where $F$ is a free group, consider the quotient $\frac{F}{[K,F]}$ of $F$ by the commutator subgroup $[K,F]$ as in the diagram
\begin{equation}
\label{Universal-ext}
% https://q.uiver.app/#q=WzAsMyxbMCwwLCJGIl0sWzEsMSwiRyJdLFsyLDAsIkYvW0ssRl0iXSxbMCwxLCJmIiwyLHsic3R5bGUiOnsiaGVhZCI6eyJuYW1lIjoiZXBpIn19fV0sWzIsMSwiXFxiYXJ7Zn0iLDAseyJzdHlsZSI6eyJoZWFkIjp7Im5hbWUiOiJlcGkifX19XSxbMCwyLCIiLDIseyJzdHlsZSI6eyJoZWFkIjp7Im5hbWUiOiJlcGkifX19XV0=
\begin{tikzcd}
	F && F/[K,F] \\
	& G
	\arrow[two heads, from=1-1, to=1-3]
	\arrow["f"', two heads, from=1-1, to=2-2]
	\arrow["{\bar{f}}", two heads, from=1-3, to=2-2]
\end{tikzcd}
 % \xymatrix{F \ar@{->>}[rr] \ar@{->>}[dr]_f & & \frac{F}{[K,F]} \ar@{->>}[dl]^{\overline{f}} \\
 % &G, &
 % }
\end{equation}
where the induced morphism $\overline{f}$ is then a weakly universal central extension of $G$ \cite{BB}. The Galois group of this central extension  $\overline{f}$ of $G$ turns out to be an \emph{invariant} of $G$, called the \emph{fundamental group} $\pi_1(G)$ \cite{Jan-Abstract} of $G$, that is naturally isomorphic to the second integral homology group of $G$ 
$$\pi_1(G)\cong  \frac{K \cap [F,F]}{[K,F]} \cong \mathsf{H}_2 (G)$$
via the classical Hopf formula, i.e. the right-hand isomorphism. The commutators $[K,F]$ and $[F,F]$ appearing in this formula are ``relative'' to the chosen subvariety of the variety $\mathsf{Grp}$ of groups, in this case the variety $\mathsf{Ab}$ of abelian groups, so that they could also be denoted by $[K,F]_{\mathsf{Ab}}$ and $[F,F]_{\mathsf{Ab}}$, respectively. The homology group $\mathsf{H}_2 (G)$ can also be obtained by applying comonadic homology theory \cite{BB} to the variety $\mathsf{Grp}$ with respect to the abelianisation functor $\mathsf{ab} \colon \mathsf{Grp} \rightarrow \mathsf{Ab}$, so that $\mathsf{H}_2 (G) \cong \mathsf{H}_2 (G, \mathsf{ab})$, the right-hand side homology group now being the one arising from the comonadic approach by taking $\mathsf{ab}$ as coefficient functor:
\begin{equation}\label{Hopf-formula}
  \frac{K \cap [F,F]_{\mathsf{Ab}}}{[K,F]_\mathsf{Ab}} \cong \mathsf{H}_2 (G, \mathsf{ab})
  \end{equation}
%(see also the last section in this %article for more details).

The comonadic homology theory \cite{BB} in the semi-abelian context (as developed in \cite{EVdL, EGVdL}), allows one to establish several generalized Hopf formulas for homology in other semi-abelian varieties than the variety of groups, relatively to a given subvariety (not necessarily the one of ``abelian algebras'', as it is the case in the example above). The main difficulty in computing these formulas is to find an algebraic characterization of the central extensions (in the sense of \cite{janelidze1994galois}) with respect to the given subvariety, so that the denominator in the formula \eqref{Hopf-formula} can be made explicit. We do this in the present article in the variety $\mathsf{SKB}$ with respect to three particularly interesting subvarieties: the varieties $\mathsf{RadRng}$ of radical rings, $\mathsf{Grp}$ of groups and $\mathsf{Ab}$ of abelian groups. These are connected by the following adjunctions
\begin{equation}
\label{Adjunctions}
\begin{tikzcd}
	{\textsf{SKB}} & \perp & {\textsf{Grp}} \\
	\vdash && \vdash \\
	{\textsf{RadRng}} & \perp & {\textsf{Ab}}
	\arrow["F", shift left=2, from=1-1, to=1-3]
	\arrow["G", shift left=2, from=1-1, to=3-1]
	\arrow["{U_G}", shift left=2, from=1-3, to=1-1]
	\arrow["{\textsf{ab}}", shift left=2, from=1-3, to=3-3]
	\arrow["U_R", shift left=2, from=3-1, to=1-1]
	\arrow["{\hat{F}}", shift left=2, from=3-1, to=3-3]
	\arrow["U", shift left=2, from=3-3, to=1-3]
	\arrow["{U_A}", shift left=2, from=3-3, to=3-1]
\end{tikzcd}
% \xymatrix@=50pt{ {\mathsf{SKB} \, } \ar@<1ex>[d]^G_{\vdash} \ar@<2ex>[r]^-{F}& {\, \mathsf{Grp} \, }\ar@<1ex>[d]^{\mathsf{ab}} \ar@<1ex>[l]_-{\perp}^-{U_G} \\
% {\mathsf{RadRng} \, } \ar@<2ex>[u]^{U_R}_{\vdash}  \ar@<2ex>[r]^-{\hat{F}}& {\, {\mathsf{Ab},} \ar@<2ex>[u]^{U}_{\vdash} \, }\ar@<1ex>[l]_-{\perp}^-{U_A}
% }
\end{equation}
where $U_A, U_R$, $U_G$ and $U$ are inclusion functors, and $\hat{F}$, $G$, $F$ and $\mathsf{ab}$ their left adjoints.
% https://q.uiver.app/#q=WzAsOCxbMCwwLCJcXHRleHRzZntTS0J9Il0sWzAsMiwiXFx0ZXh0c2Z7UmFkUm5nfSJdLFswLDEsIlxcdmRhc2giXSxbMiwyLCJcXHRleHRzZntBYn0iXSxbMiwwLCJcXHRleHRzZntHcnB9Il0sWzEsMCwiXFxwZXJwIl0sWzIsMSwiXFx2ZGFzaCJdLFsxLDIsIlxccGVycCJdLFsxLDAsIkYiLDAseyJvZmZzZXQiOi00fV0sWzAsMSwiRyIsMCx7Im9mZnNldCI6LTR9XSxbMSwzLCJcXGhhdHtGfSIsMCx7Im9mZnNldCI6LTN9XSxbMCw0LCJGIiwwLHsib2Zmc2V0IjotNH1dLFs0LDAsIlVfRyIsMCx7Im9mZnNldCI6LTR9XSxbNCwzLCJcXHRleHRzZnthYn0iLDAseyJvZmZzZXQiOi0zfV0sWzMsNCwiVSIsMCx7Im9mZnNldCI6LTN9XSxbMywxLCJVX0EiLDAseyJvZmZzZXQiOi0zfV1d

 First, in Section \ref{Section-Grp}, we characterize the central extensions in $\mathsf{SKB}$ corresponding to the subvariety $\mathsf{Grp}$ of groups (Proposition \ref{Pro:CenterGroup}) that are the surjective homomorphisms $f \colon A \rightarrow B$ of skew braces such that $[\Ker(f),A]_{\mathsf{Grp}}=0$,
where $[\Ker(f),A]_{\mathsf{Grp}}$
is the additive subgroup of $A$ generated by all the elements of the form $$\lbrace a*b, \quad  b*a,\quad c+b*a-c\, \mid \, a\in \Ker (f ),b \in A ,c\in A \rbrace,$$ 
where $a*b=-a+a\circ b-b$, which is an ideal of $A$.

Section \ref{sect:Radrings} is devoted to the study of central extensions of skew braces relative to the subvariety $\mathsf{RadRng}$ of radical rings.
For this purpose we introduce a new object, the \emph{radicalator} (Definition \ref{def:radicalator}). 
 This is generated as a subgroup of the additive group by the elements $(a+b)\circ c -b\circ c +c -a\circ c$ and $a+b-a-b$ for all $a,b,c\in A$.
The radicalator turns out to be an ideal $[A,A]_{\mathsf{RadRng}}$ (Proposition \ref{RadIdeal}) playing a similar role with respect to the reflection from $\mathsf{SKB}$ to $\mathsf{RadRng}$ to the role played by the derived subgroup in the reflection from $\mathsf{Grp}$ to $\mathsf{Ab}$. Indeed, the quotient $A \rightarrow \frac{A}{[A,A]_{\mathsf{RadRng}}}$ is the universal one transforming a skew brace $A$ into a radical ring $\frac{A}{[A,A]_{\mathsf{RadRng}}}$, and an extension $f \colon A \rightarrow B$ of skew braces is central relatively to the subvariety $\mathsf{RadRng}$ if and only if $[\Ker(f), A]_{\mathsf{RadRng}} =0$ (Theorem \ref{Central-Radical}).
 This pushes forward Rump's connection between skew braces and radical rings 
\cite{Rump} (see also \cite{MR4136750}). We also observe that the central extensions of skew braces relative to the subvariety of braces (= skew braces of abelian type) can also be characterized in algebraic terms (Remark 
\ref{RemBraces}). 

In Section \ref{Section-Ab}, we consider the reflection from $\mathsf{SKB}$ to the subvariety $\mathsf{Ab}$ of abelian groups, and characterize the corresponding central extensions. For this, given an ideal $I$ of $A$, the additive subgroup generated by the set 
\[\lbrace [a,b]_+,\: a*b,\: [a,b]_\circ \quad \text{for all}\quad a\in I,  b\in A\rbrace.\]
is shown to be an ideal (Proposition \ref{Relative-Ab}), denoted by $[I,A]_{\mathsf{Ab}}$ (here $[a,b]_+$ and $[a,b]_\circ$ are the usual group-theoretic commutators with respect to the operations $+$ and $\circ$, respectively). An extension $f \colon A \rightarrow B$ of skew braces is then central with respect to $\mathsf{Ab}$ if and only if $[\Ker(f), A]_{\mathsf{Ab}}=0$ (Proposition \ref{Pro:CentralAbelian}).
Note that these central extensions of skew braces have already been studied in \cite{MR4688775,MR4604853,MR4697968}. We also observe that the commutator $[\Ker(f), A]_{\mathsf{Ab}}$ coincides with the Huq commutator defined in \cite{Huq} (Proposition \ref{Central-Huq}).

In the last section we deduce the corresponding Hopf formulae for homology of skew braces, and 
then apply the Stallings--Stammbach exact sequence holding in any semi-abelian variety \cite{EVdL, EGVdL} to these contexts. A result relating lower central series to low-dimensional homology concludes the article (Corollary \ref{series}).
This work opens the way to the study of \emph{relative commutators} in skew braces in the sense of \cite{MR2311168}, as well as to the theory of higher central extensions of skew braces, following the recent developments in categorical algebra \cite{EGVdL, Jan-Abstract, Protoadditive, Duckerts}.

\section{Preliminaries}
Recall that a \emph{skew brace} \cite{MR3647970} is a triple $(A,+,\circ)$, 
where $(A,+)$ and $(A,\circ)$ are groups such that
the compatibility condition 
$a\circ (b+c)=a\circ b-a+a\circ c$ holds for all $a,b,c\in A$. The inverse of an element $a\in A$ with respect to the circle 
operation $\circ$ will be denoted by $a'$. We will denote the groups $(A,+)$ and $(A,\circ)$ by $A_+$ and $A_\circ$, respectively.

Let $A$ be a skew brace. There are two canonical actions by automorphisms 
\begin{align*}
    &\lambda\colon A_\circ\to \Aut(A_+), &&\lambda_a(b)=-a+a\circ b,\\
    &\rho\colon A_\circ\to \Aut(A_+), &&\rho_a(b)=a\circ b-a.
\end{align*}

We can introduce a binary operation $* \colon A \times A \rightarrow A$ defined by $a*b=-a+a\circ b-b$ for all $a,b\in A$. 
By definition, a normal subgroup $I$ of $A_+$
is an \emph{ideal} of $A$ if 
$a*x \in I$ and $x*a \in I$ for all $a\in A$ and $x\in I$.
A normal subgroup $I$ of the group $(A,+)$ such that $I$ is a subgroup of $A_\circ$ and $A*I\subset I$ is called a \emph{strong left ideal}.

\begin{example}
    Let $A$ be a skew brace. Then the center $Z(A_+)$ is a strong left ideal of $A$. 
\end{example}

% \begin{proof}
%     %It is well known that $Z(A_+)$ is a normal subgroup of %the additive group $A_+$.
%     Let $a,b\in A$ and $k\in Z(A_+)$. Then 
%     \[
%     \lambda_a( k) +b = a\circ (a'+k -a'+a'\circ b)= a\circ (a'\circ b +k)= b+\lambda_a(k).
%     \]
%     Finally, let $k,k_1\in Z(A_+)$
%      \[
%     k'+b=k'\circ (-k+k\circ b)=k'\circ (k\circ b-k)=b+k'
%     \]
%     and
%     \[
%      k\circ k_1'+b=k\circ (k_1'-k'+k'\circ b)=b+k'\circ k_1'.
%     \]
%     which concludes the proof.
% \end{proof}

%An important algebraic fact that we will use multiple time throughout the paper is that for all $a,b\in A$, $a\circ (-b)=a-a\circ b +a$ so that $b\circ \rho_{b'}(a)=a+b$.
\begin{definition}
    We call \emph{extension} of $B$ a surjective morphism $f:A\to B$ in $\mathsf{SKB}$, and we also denote this extension by $(A,f)$.
\end{definition}

Since skew braces form a variety $\mathsf{SKB}$ of $\Omega$-groups in the sense of Higgins \cite{Higgins}, any subvariety $\mathcal{X}$ of $\mathsf{SKB}$ is admissible in the sense of Categorical Galois Theory \cite{janelidze1994galois}. We denote by $I \colon \mathsf{SKB}\to \mathcal{X}$ the reflector to the subvariety $\mathcal{X}$ of $\mathsf{SKB}$, left adjoint of the inclusion functor $U \colon \mathcal{X} \rightarrow \mathsf{SKB}$.
\begin{definition}
For any $A$ in $\mathsf{SKB}$, the kernel of the $A$-component $\eta_A \colon A \rightarrow UI(A)$ of the unit $\eta$ of the adjunction 
\begin{equation}\label{adjunction}
% https://q.uiver.app/#q=WzAsMyxbMCwwLCJcXG1hdGhjYWx7WH0iXSxbMiwwLCJcXHRleHRzZntTS0J9Il0sWzEsMCwiXFxwZXJwIl0sWzEsMCwiSSIsMix7Im9mZnNldCI6M31dLFswLDEsIlUiLDIseyJvZmZzZXQiOjN9XV0=
\begin{tikzcd}
	{\mathcal{X}} & \perp & {\mathsf{SKB}}
	\arrow["U"', shift right=3, from=1-1, to=1-3]
	\arrow["I"', shift right=3, from=1-3, to=1-1]
\end{tikzcd}
% \xymatrix@=20pt{ {\mathcal X \, } \ar@<-1ex>[r]_-{U}^-{\perp} & {\, \mathsf{SKB} \, }
% \ar@<-2ex>[l]_-I}
\end{equation}
will be denoted by $R(A)$, so that 
\[
\begin{tikzcd}
0 & R(A) & A & UI(A) & 0
\arrow[from=1-1, to=1-2]
\arrow[from=1-2, to=1-3]
\arrow[from=1-3, to=1-4]
\arrow[from=1-4, to=1-5]
\end{tikzcd}
\]
% $$ 
% \xymatrix{0 \ar[r] & R(A) \ar[r] & A \ar[r] & UI(A) \ar[r] & 0}
% $$
is an exact sequence in $\mathsf{SKB}$.
\end{definition}
Observe that $R \colon \mathsf{SKB} \rightarrow \mathsf{SKB}$ is an endofunctor of $\mathsf{SKB}$.

Given an extension $f:A\to B$ we form the pullback
\begin{equation}\label{central-ext}
% https://q.uiver.app/#q=WzAsNCxbMCwwLCJBXFx0aW1lc19CQSJdLFsxLDAsIkEiXSxbMSwxLCJCIl0sWzAsMSwiQSJdLFswLDEsInQiXSxbMSwyLCJmIl0sWzAsMywicyIsMl0sWzMsMiwiZiIsMl1d
\begin{tikzcd}
	{A\times_BA} & A \\
	A & B
	\arrow["t", from=1-1, to=1-2]
	\arrow["s"', from=1-1, to=2-1]
	\arrow["f", from=1-2, to=2-2]
	\arrow["f"', from=2-1, to=2-2]
\end{tikzcd}
% \xymatrix{A \times_B A \ar[r]^{t} \ar[d]_{s}& A \ar[d]^f \\
% A \ar[r]_f & B
% }
\end{equation}
where $A \times_B A = \{ (a_1, a_2) \in A \times A \, \mid \, f(a_1)=f(a_2)\}$, while $s$ and $t$ are the first and second projection, respectively.
The definition of \emph{central extension} relative to a given subvariety $\mathcal X$ of $\mathsf{SKB}$ (as in the work of the Fr\"{o}hlich school \cite{Frolich, FC, Lue}) is then the following:
\begin{definition}
    For a subvariety $\mathcal{X}$ of the variety $\mathsf{SKB}$ of skew braces, an extension $(A,f)$ is \emph{central} if $R(s)=R(t)$.
\end{definition}
As observed in \cite{janelidze1994galois}, this definition coincides with the categorical one in the case of varieties of $\Omega$-groups, hence in particular in the variety $\mathsf{SKB}$.
Any subvariety $\mathcal X$ of $\mathsf{SKB}$ as above
is then \emph{admissible} in the sense of Categorical Galois Theory for the class of \emph{surjective homomorphisms} (see Theorem $3.4$ in \cite{janelidze1994galois}).
This means that the left adjoint $I$ in \eqref{adjunction} preserves a special type of pullbacks, namely the ones of the form
\[
\begin{tikzcd}
	{A\times_{UI(A)}U(B)} & U(B) \\
	A & UI(A)
	\arrow[from=1-1, to=1-2]
	\arrow[from=1-1, to=2-1]
	\arrow["\phi", from=1-2, to=2-2]
	\arrow["\eta_A"', from=2-1, to=2-2]
\end{tikzcd}
\]
 % $$\xymatrix{A \times_{UI(A)}U(B) \ar[r]^{} \ar[d]_{}& U(B) \ar[d]^{\phi} \\
 % A \ar[r]_{\eta_A} & UI(A)
 % }
 % $$
where $\phi$ is any \emph{surjective} homomorphism and $\eta_A$ is the $A$-component of the unit $\eta$ of the adjunction. The property of admissibility guarantees the validity of a general Galois theorem \cite{Jan} that in our case allows one to classify the central extensions of a skew brace $B$ in terms of a suitable category of (internal) actions in $\mathcal X$ on the Galois groupoid of a weakly universal central extension $(E,p)$ of $B$ (see the last section in \cite{janelidze1994galois} for more details).

\section{The subvariety of groups}\label{Section-Grp}
By the classical Birkhoff theorem, the variety of groups can be identified with the subvariety $\mathsf{Grp}$ of $\mathsf{SKB}$ determined by the additional identity $x + y= x \circ y$. It then follows that there is an adjunction between these two categories
\begin{equation}\label{adjunction-groups}
\begin{tikzcd}
	{\mathsf{Grp}} & \perp & {\textsf{SKB}}
	\arrow["{U_G}"', shift right=3, from=1-1, to=1-3]
	\arrow["F"', shift right=3, from=1-3, to=1-1]
\end{tikzcd}
% \xymatrix@=20pt{ {\mathsf{Grp} \, } \ar@<-1ex>[r]_-{U_G}^-{\perp} & {\, \mathsf{SKB} \, }
% \ar@<-2ex>[l]_-F}
\end{equation}
where $\mathsf{Grp}$ is the subvariety of $\mathsf{SKB}$ whose objects are skew braces with the property that the two group operations $+$ and $\circ$ are equal. This subvariety is isomorphic to the category of groups, of course, and this justifies the slight abuse of notation. The functor $U_G$ is a full inclusion, and in order to describe its left adjoint $F$ in \eqref{adjunction-groups} the following definition will be needed:
\begin{definition}
    Let $A$ be a skew brace, we denote by $A*A$ the subgroup of $A_+$ generated by the elements $a*b$ for all $a,b\in A$. 
\end{definition}
It is well known that $A*A$ is an ideal of $A$ and it is the smallest ideal $I$ such that the quotient $A/I$ is a group.

In our situation the reflector $F: \mathsf{SKB}\to \mathsf{Grp}$ is the functor  sending the skew brace $A$ to the quotient $F(A) = A/(A*A)$ by the ideal $A*A$ (with obvious definition on morphisms). Note that here the ideal $A*A$ is precisely the kernel $R(A)$ of the $A$-component of the unit $\eta_A \colon A \rightarrow A/(A*A)$ of the adjunction \eqref{adjunction-groups}. This adjunction is admissible in the sense of Categorical Galois theory, since $\mathsf{Grp}$ is a subvariety of the variety $\mathsf{SKB}$ (see Section $5$ in \cite{janelidze1994galois}).

\begin{definition}
    Let $A$ be a skew brace and $I\subset A$ an ideal, we define $[I,A]_{\mathsf{Grp}}$ to be the additive subgroup of $A$ generated by the set
    \[\lbrace a*b, \quad  b*a,\quad c+b*a-c\, \mid \, a\in I,\; b,c \in A\rbrace.\]
\end{definition}

\begin{proposition}
    Let $A$ be a skew brace and $I\subset A$ an ideal of $A$. Then $[I,A]_{\mathsf{Grp}}$ is an ideal.
\end{proposition}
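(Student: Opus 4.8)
The plan is to verify directly the three conditions in the definition of an ideal for $J := [I,A]_{\mathsf{Grp}}$: that $J$ is a normal subgroup of $(A,+)$, and that $a*x\in J$ and $x*a\in J$ for all $a\in A$ and $x\in J$. The key preliminary observation, which trivialises the two $*$-conditions, is that $J\subseteq I$. Indeed every generator lies in $I$: the elements $\alpha*b$ and $b*\alpha$ with $\alpha\in I$, $b\in A$ belong to $I$ because $I$ is an ideal, and $c+b*\alpha-c$ belongs to $I$ because $I$ is normal in $(A,+)$. Since $J$ is the additive subgroup generated by these elements and $I$ is itself an additive subgroup, we get $J\subseteq I$.

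Granting $J\subseteq I$, the two $*$-conditions follow immediately from the shape of the generators. The generating set contains every $\alpha*b$ and every $b*\alpha$ with $\alpha\in I$ and $b\in A$. Hence, for $x\in J\subseteq I$ and $a\in A$, the product $x*a$ is a generator of the first type (with $\alpha=x\in I$ and $b=a$) and $a*x$ is a generator of the second type (with $b=a$ and $\alpha=x\in I$); both therefore lie in $J$ by definition.

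So the only genuine work is to prove that $J$ is normal in $(A,+)$. Since conjugation $z\mapsto c+z-c$ by a fixed $c\in A$ is an automorphism of $(A,+)$ and so respects sums, it suffices to check that the conjugate $c+g-c$ of each generator $g$ again lies in $J$. For $g=b*\alpha$ (with $\alpha\in I$) the conjugate $c+b*\alpha-c$ is itself a listed generator, and conjugating such an element once more merely replaces $c$ by a sum, again giving a listed generator; this is exactly why the defining set was set up to contain the conjugates of the elements $b*\alpha$. I expect the delicate case — the main obstacle — to be $g=\alpha*b$ with $\alpha\in I$, whose conjugates are \emph{not} among the listed generators.

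To handle it I would use the skew brace identity $\alpha*(u+v)=\alpha*u+u+\alpha*v-u$, which follows from the compatibility condition by writing $\alpha*(u+v)=\lambda_\alpha(u)+\lambda_\alpha(v)-v-u$ and substituting $\lambda_\alpha(w)=\alpha*w+w$. Taking $u=c$ and $v=b$ yields $\alpha*(c+b)=\alpha*c+c+\alpha*b-c$, whence $c+(\alpha*b)-c=-(\alpha*c)+\alpha*(c+b)$. Both $\alpha*c$ and $\alpha*(c+b)$ are of the form $\alpha*(\cdot)$ with $\alpha\in I$, hence generators of $J$, so the conjugate $c+(\alpha*b)-c$ is a difference of two generators and lies in $J$. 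This single application of a nontrivial skew brace identity is the heart of the argument; once every conjugate of a generator is known to lie in $J$, normality — and with it the ideal property — follows.
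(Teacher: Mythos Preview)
Your proposal is correct and follows essentially the same route as the paper: first observe $J\subseteq I$ so that the two $*$-conditions are immediate from the form of the generators, and then use the identity $\alpha*(c+b)=\alpha*c+c+\alpha*b-c$ to show that conjugates of the generators $\alpha*b$ (with $\alpha\in I$) lie in $J$. The paper's proof is exactly this argument, compressed into a couple of lines.
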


\begin{proof}
        Let $J=[I,A]_{\mathsf{Grp}}$.
        Since $I$ is an ideal, we have that $J\subset I$. Thus, by definition of $J$, it is clear that $J*A\subset J$ and $A*J\subset J$.
        Moreover, $$a*(c+b)= a*c+c+ a*b-c \in J$$ for all $a\in J$ and $b,c\in A$. Thus, $c+ a*b-c\in J$, so that $J$ is an ideal of $A$.
\end{proof}

\begin{remark}
    The additive subgroup of $A$ generated by the subset 
    \[\lbrace a*b, \quad  b*a\, \mid \, a\in I,b\in A\rbrace\] is not an ideal of $A$, in general. 
    The database of \cite{MR3647970} contains a (minimal) counterexample 
    of size 24.
    %Computer calculations 
    %The smallest counter-example found using GAP is in smallskewbrace(24,706).
\end{remark}

\begin{proposition}
\label{Pro:CenterGroup}
    An extension $(A,f)$ is central with respect to the adjunction \eqref{adjunction-groups} if and only if $[\Ker(f),A]_{\mathsf{Grp}}=0$.
\end{proposition}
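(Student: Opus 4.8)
The plan is to unwind the categorical centrality condition $R(s) = R(t)$ into a concrete elementwise statement and then match it against the generators of $[\Ker(f), A]_{\mathsf{Grp}}$. Recall that $R(A) = A * A$, so that $R(A \times_B A) = (A\times_B A)*(A\times_B A)$ is the additive subgroup of $(A\times_B A)_+$ generated by the elements $(x_1,x_2)*(y_1,y_2)$ with $(x_1,x_2),(y_1,y_2) \in A\times_B A$. Since $A\times_B A$ is a sub-skew brace of $A \times A$, the operation $*$ is computed componentwise, $(x_1,x_2)*(y_1,y_2) = (x_1*y_1,\, x_2*y_2)$. As $R(s)$ and $R(t)$ are the additive homomorphisms obtained by restricting the two projections, and these are determined on a generating set, the condition $R(s)=R(t)$ is equivalent to $x_1*y_1 = x_2*y_2$ for all $(x_1,x_2),(y_1,y_2)\in A\times_B A$, i.e. whenever $f(x_1)=f(x_2)$ and $f(y_1)=f(y_2)$.

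First I would record the elementary decomposition $(x_1,x_2) = (x_1,x_1) + (0,\,-x_1+x_2)$, valid because $-x_1+x_2 \in \Ker(f)$, which shows that $A\times_B A$ is generated by the diagonal and by the elements $(0,k)$ with $k\in\Ker(f)$. For the ``only if'' direction it then suffices to specialise: taking $(x_1,x_2)=(0,k)$ and $(y_1,y_2)=(b,b)$ gives $0 = 0*b = k*b$, and taking $(x_1,x_2)=(b,b)$, $(y_1,y_2)=(0,k)$ gives $0 = b*0 = b*k$; hence $\Ker(f)*A = 0 = A*\Ker(f)$. I would then observe that this is exactly the vanishing of $[\Ker(f),A]_{\mathsf{Grp}}$: once $b*a=0$ for every $a\in\Ker(f)$, the third family of generators $c+b*a-c$ collapses to $0$, so $[\Ker(f),A]_{\mathsf{Grp}}=0$ if and only if $\Ker(f)*A=0=A*\Ker(f)$.

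For the ``if'' direction, assuming $\Ker(f)*A=0=A*\Ker(f)$, I would prove $x_1*y_1 = x_2*y_2$ by changing one argument at a time. Changing the right argument is routine: writing $y_2 = y_1 + q$ with $q=-y_1+y_2\in\Ker(f)$ and using the identity $x*(y+q)=x*y+y+x*q-y$ together with $x_2*q=0$ gives $x_2*y_2 = x_2*y_1$. The hard part will be changing the left argument, since $*$ is not additive in its first variable; here the conjugation term built into the definition of $[\Ker(f),A]_{\mathsf{Grp}}$ becomes essential. To handle it I would write $x_2 = x_1 + p$ with $p=-x_1+x_2\in\Ker(f)$ and use the factorisation $x_1+p = x_1\circ \lambda_{x_1}^{-1}(p)$ together with the standard identity $(a\circ b)*c = a*(b*c)+b*c+a*c$, obtaining $(x_1+p)*y_1 = x_1*(p'*y_1)+p'*y_1+x_1*y_1$ with $p'=\lambda_{x_1}^{-1}(p)$. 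Since $\Ker(f)$ is an ideal it is stable under $\lambda_{x_1}^{-1}$, so $p'\in\Ker(f)$ and $p'*y_1\in\Ker(f)*A=0$; the first two summands vanish and $x_2*y_1 = x_1*y_1$. Combining the two steps yields $x_2*y_2 = x_2*y_1 = x_1*y_1$, which is the required centrality condition. The only genuine subtlety, and the step I would be most careful about, is this left-variable manipulation and the verification that $\Ker(f)$ is $\lambda$-stable, which is precisely where the ideal (rather than merely subgroup) hypothesis on $\Ker(f)$ is used.
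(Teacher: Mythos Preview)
Your argument is correct. The ``only if'' direction is essentially identical to the paper's. For the ``if'' direction, the paper takes a different route: instead of changing one argument at a time, it computes $a*a_1 - b*b_1$ directly in a single chain of equalities, the key trick being the observation that $-a_1+b_1 = a_1'\circ b_1$ (which follows from $A*\Ker(f)=0$, since then $a_1\circ(-a_1+b_1)=a_1+(-a_1+b_1)=b_1$), together with the analogous fact that $a\circ b'\in\Ker(f)$ acts trivially. Your two-step approach is more modular: the right-argument step via $x*(y+q)=x*y+y+x*q-y$ is routine, and for the left argument you bring in the standard identity $(a\circ b)*c = a*(b*c)+b*c+a*c$ after writing $x_1+p = x_1\circ\lambda_{x_1}^{-1}(p)$. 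This makes transparent exactly where $\lambda$-stability of $\Ker(f)$ (i.e.\ the ideal hypothesis) is used, whereas in the paper's computation the hypotheses $A*\Ker(f)=0$ and $\Ker(f)*A=0$ are woven into one chain. Both arguments are of comparable length; yours generalises more readily, while the paper's avoids invoking auxiliary $*$-identities.
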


\begin{proof}
    Assume that $(A,f)$ is a central extension. Take $A \times_B A$ as in \eqref{central-ext}. Clearly, for any $a\in \Ker(f)$, $(a,0)\in A \times_B A$. If $b\in A$, then $$(a,0)*(b,b) = (a*b,0)\in {R(A \times_ B A)}.$$ Since $(A,f)$ is central, by definition $a*b=R(s)(a*b,0) = R(t)(a*b,0) = 0$. Similarly, one can show that $b*a=0$. As $b \in A$ and $a \in \Ker(f)$ were chosen arbitrarily, it follows that $[\Ker(f),A]_{\mathsf{Grp}}=0$.

    Assume now that $[\Ker(f),A]_{\mathsf{Grp}}=0$. Let $(a,b)$ and $(a_1,b_1)$ be two elements in $A \times_B A$. Since $-a_1+ b_1 \in \Ker (f)$ one has that $a_1 \circ (-a_1+b_1) = b_1$ so that $-a_1+b_1=a_1'\circ b_1$. 
    It follows that
    \begin{align*}
        a*a_1-b*b_1 &=-a+a\circ a_1 -a_1+b_1-b\circ b_1 +b\\
        &=-a+a\circ a_1 +a_1'\circ b_1-b\circ b_1 +b \\
        &=-a+a\circ a_1 \circ a_1'\circ b_1-b\circ b_1 +b\\ 
        &= -a+ a\circ b_1-b\circ b_1 +b  \\
         &= -a+ (a\circ b'\circ b \circ b_1)  -b\circ b_1 +b\\
          &= -a+ (a\circ b'+ b \circ b_1)  -b\circ b_1 +b\\
       % &= -a + a\circ b' +b\circ b_1-b\circ b_1+b\\
        &= -a + a\circ b'+b\\
        &= -a +a \circ b' \circ b\\
        &=0,
    \end{align*}
    where we have used the fact that $a_1'\circ b_1 \in  \Ker (f)$ and $a\circ b' \in  \Ker (f)$. It follows that the restrictions $R(s)$ and $R(t)$ of the first and the second projections $s$ and $t$ (with the notations from diagram \eqref{central-ext}) are equal, as desired.
\end{proof}

\section{The subvariety of radical rings}
\label{sect:Radrings}
We write $\mathsf{RadRng}$ to denote the category of radical rings. As it follows from the results in \cite{Rump} this category can be presented as the subvariety of $\mathsf{SKB}$ determined by the following additional two identities:
$$ (a + b) \circ c = a \circ c - c + b \circ c$$
and 
$$ a + b = b + a.$$
This important observation implies that the variety $\mathsf{RadRng}$ of radical rings determines an adjunction
\begin{equation}\label{Radical}
\begin{tikzcd}
	{\mathsf{RadRng}} & \perp & {\textsf{SKB}}
	\arrow["{U_R}"', shift right=3, from=1-1, to=1-3]
	\arrow["G"', shift right=3, from=1-3, to=1-1]
\end{tikzcd}
% \xymatrix@=20pt{ {\mathsf{RadRng} \, } \ar@<-1ex>[r]_-{U_R}^-{\perp} & {\, \mathsf{SKB} \, }
% \ar@<-2ex>[l]_-{G}}
\end{equation}
where $U_R$ is the inclusion functor and $G$ its left adjoint associating, with any skew brace $A$, its universal radical ring $G(A)$. This adjunction is again admissible in the sense of Categorical Galois theory \cite{janelidze1994galois} (essentially for the same reasons as for the subvariety $\mathsf{Grp}$ of $\mathsf{SKB}$, as explained in the previous section). 
The main goal of this section is to characterize the extensions in $\mathsf{SKB}$ that are \emph{central} with respect to the adjunction \eqref{Radical}.

%We first introduce the notion of \emph{right distributor}, defined as follows:

\begin{definition}\label{radicalator}
    Let $A$ be a skew brace. We define the \emph{right distributor} of $a,b,c\in A$ to be the element 
    \[
    [a,b,c]=(a+b)\circ c -b\circ c +c -a\circ c.
    \]
\end{definition}
\begin{proposition}
    Let $A$ be a skew brace and $I$ an ideal of $A$. Then for all $c\in I$ and $a,b\in A$, the elements $[a,b,c]$, $[c,a,b]$ and $[b,c,a]$ are all in $I$.
\end{proposition}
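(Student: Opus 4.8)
The plan is to pass to the quotient skew brace and exploit the fact that the right distributor is built only from the two operations $+$ and $\circ$. Since $I$ is an ideal of $A$, the quotient $A/I$ is again a skew brace and the canonical projection $\pi\colon A\to A/I$ is a homomorphism of skew braces with $\Ker(\pi)=I$. Being a homomorphism, $\pi$ preserves $+$, $\circ$, and additive inverses, hence it preserves the right distributor: for all $x,y,z\in A$ one has $\pi([x,y,z])=[\pi(x),\pi(y),\pi(z)]$. Consequently an element $[x,y,z]$ lies in $I=\Ker(\pi)$ if and only if $[\pi(x),\pi(y),\pi(z)]=0$ in $A/I$. This reduces each of the three membership claims to a vanishing statement in the quotient, where the distinguished entry has been sent to $0$.

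The computational core is then the observation that, in \emph{any} skew brace, the right distributor vanishes as soon as one of its three entries equals $0$. Using that $0$ is the common identity of $+$ and $\circ$ (so that $w\circ 0=w=0\circ w$ for every $w$), one checks the three identities $[x,y,0]=(x+y)-y+0-x=0$, then $[0,x,y]=x\circ y-x\circ y+y-y=0$, and finally $[x,0,y]=x\circ y-y+y-x\circ y=0$. Each is a one-line manipulation in the (possibly non-abelian) additive group, requiring no distributivity beyond these identity laws.

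Combining the two ingredients finishes the argument. If $c\in I$ then $\pi(c)=0$, so for arbitrary $a,b\in A$ we get $\pi([a,b,c])=[\pi(a),\pi(b),0]=0$ by the first identity, whence $[a,b,c]\in I$; applying the second identity to $[\pi(c),\pi(a),\pi(b)]=[0,\pi(a),\pi(b)]$ gives $[c,a,b]\in I$; and the third identity applied to $[\pi(b),\pi(c),\pi(a)]=[\pi(b),0,\pi(a)]$ gives $[b,c,a]\in I$. This covers exactly the three assertions.

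The only point needing care is the input that the quotient of a skew brace by an ideal is again a skew brace, with the projection a homomorphism whose kernel is the given ideal; this is the standard role of the ideal axioms and is what makes the whole reduction legitimate. I expect no genuine obstacle: once the quotient machinery is in place, the proof is purely formal, and all the actual content is concentrated in the three boundary identities above. If one preferred to avoid quotients entirely, the same identities could instead be promoted to direct membership statements by rewriting each distributor modulo $I$ using $\lambda_a(I)=I$ and the normality of $I$ in both group structures; the quotient formulation simply packages this bookkeeping more cleanly.
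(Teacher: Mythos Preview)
Your proof is correct and follows essentially the same approach as the paper: both pass to the quotient $A/I$, use that the canonical map preserves the right distributor, and reduce to the vanishing of $[\overline{a},\overline{b},0]$, $[0,\overline{a},\overline{b}]$, and $[\overline{b},0,\overline{a}]$. The paper's proof is terser and leaves the three boundary identities implicit, whereas you spell them out explicitly, but the underlying argument is identical.
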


\begin{proof}
Let $A\to A/I$, $a\mapsto \overline{a}$, be the canonical map. 
Then 
\[
\overline{[a,b,c]}=[\overline{a},\overline{b},\overline{c}]=[\overline{a},\overline{b},0]=0.
\]
Similarly, $\overline{[c,a,b]}=\overline{[b,c,a]}=0$.
\end{proof}

\begin{definition}
    Let $A$ be a skew brace, and $I\subset A$ an ideal. We define $[I,A]_{\mathsf{RadRng}}$ to be the additive subgroup of $A$ generated by the elements $[a,b,c]$, $[c,a,b]$, $[b,c,a]$ and $a+b-a-b$ for all $a\in I$ and $b,c\in A$.
\end{definition}

Our next goal is to show that $[I,A]_{\mathsf{RadRng}}$ is an ideal.

\begin{remark}
    If we write $J=[I,A]_{\mathsf{RadRng}}$, then $J_+$ is a normal subgroup of $A_+$. Indeed, for any $j \in J \subset I$ and $a \in A$, the element $j + a -j -a$ belongs to $J$, so that $J$ is stable under conjugation in $A$.
\end{remark}

\begin{lemma}\label{Radic-quotient}
    \label{lem:rightDistrib}
    Let $A$ be a skew brace, $I\subset A$ an ideal and $J=[I,A]_{\mathsf{RadRng}}$. Then $I_+/J_+ \subset Z(A_+/J_+)$.  In addition, if $a,b,c\in A$ and one of them is in $I$, then $$ \overline{(a+b)\circ c}=\overline{a\circ c-c+b\circ c} $$
    in $I_+/J_+.$
\end{lemma}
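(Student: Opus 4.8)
The plan is to extract each of the two assertions directly from the corresponding family of generators of $J=[I,A]_{\mathsf{RadRng}}$. Throughout I write $\overline{\,\cdot\,}\colon A_+\to A_+/J_+$ for the quotient homomorphism of additive groups, which is available because $J_+$ is a normal subgroup of $A_+$ by the preceding remark.

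For the centrality claim I would read off the additive commutators: by definition the elements $a+b-a-b$ with $a\in I$ and $b\in A$ belong to $J$, so $\overline{a}+\overline{b}-\overline{a}-\overline{b}=0$ in $A_+/J_+$ for all such $a,b$. Thus every class coming from $I$ commutes additively with every class of $A_+/J_+$, and since $a$ ranges over all of $I$ and $b$ over all of $A$ this is exactly $I_+/J_+\subset Z(A_+/J_+)$.

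For the distributivity relation the first step is to invert the defining expression of the right distributor. Solving $[a,b,c]=(a+b)\circ c-b\circ c+c-a\circ c$ for its leading term (moving the three trailing summands to the right in the correct order, since $A_+$ need not be abelian) yields
\[
(a+b)\circ c=[a,b,c]+\bigl(a\circ c-c+b\circ c\bigr)
\]
in $A$. Applying $\overline{\,\cdot\,}$, it remains only to see that $[a,b,c]\in J$ whenever one of $a,b,c$ lies in $I$. This is precisely what the three generating families $[a,b,c]$, $[c,a,b]$, $[b,c,a]$ (each with its distinguished argument in $I$) provide: a right distributor whose first, second, or third entry lies in $I$ is, respectively, a generator of one of these three types. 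Hence $\overline{[a,b,c]}=0$ and $\overline{(a+b)\circ c}=\overline{a\circ c-c+b\circ c}$ in $A_+/J_+$; note that in $A$ the two sides differ precisely by $[a,b,c]\in I$ (by the earlier proposition), so the relation indeed takes place within the central subgroup $I_+/J_+$, matching the statement.

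The argument uses nothing beyond the definition of $J$ and the normality of $J_+$; the only steps demanding care are the non-commutative rearrangement of the distributor identity and the purely combinatorial check that the index patterns $(a,b,c)$, $(c,a,b)$, $(b,c,a)$ between them place the $I$-entry in each of the three positions, so that every relevant distributor is a generator. I expect this last matching to be the main (if modest) obstacle to writing the proof cleanly.
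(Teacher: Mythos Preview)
Your proof is correct and follows exactly the approach the paper has in mind: the paper's own proof consists of the single sentence ``This is a direct consequence of the definition of $J$,'' and you have simply spelled out that consequence in detail. Your unpacking of the distributor identity and the bookkeeping verifying that an $I$-entry in each of the three positions matches one of the generating families is accurate and is all that is needed.
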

\begin{proof}
    This is a direct consequence of the definition of $J$.
\end{proof}
% \begin{lemma}
% \label{lem:rightDistrib}
%     Let $A$ be a skew brace. Let $n\geq 1$ be an integer, $\sigma\in \Sym(n)$ and $a_1\dots a_n,b\in A$. Then there exists $u_1$ and $u_2$ in $A_R$ such that 
%     \begin{align*}
%         \sum_{i=1}^na_i&= \sum_{i=1}^n u_1+ a_{\sigma(i)}\\
%         \shortintertext{and}
%         (\sum_{i=1}^na_i)\circ b &= u_2+ b+\sum_{i=1}^n(-b+a_i\circ b).
%     \end{align*}
% \end{lemma}
% \begin{proof}
%     The element $u_1$ is a well chosen element of the commutator subgroup of $A_+$. For the second statement, we proceed by induction. The claim is clear for $n=1$. Let $n\geq 2$. Then
%     \begin{multline}
%     \label{eq:noname}
%     (\sum_{i=1}^n a_i)\circ b= (\sum_{i=1}^{n-1} a_i+a_n)\circ b -a_n\circ b \\+b -(\sum_{i=1}^{n-1} a_i)\circ b+(\sum_{i=1}^{n-1} a_i)\circ b-b+a_n\circ b
%     \end{multline}
%     By the inductive hypothesis, there exists $v\in A_R$ such that 
%     \[(\sum_{i=1}^{n-1}a_i)\circ b = v+ b+\sum_{i=1}^{n-1}(-b+a_i\circ b).\]
%     Applying this in equation \eqref{eq:noname} implies the result.
% \end{proof}
\begin{corollary}
\label{cor:rightDistrib}
   Let $A$ be a skew brace, $I\subset A$ an ideal and $J=[I,A]_{\mathsf{RadRng}}$. For all $a\in I$ and $b,d\in A$ we have the following identities in $A_+/J_+$:
   \begin{align*}
       \overline{(b\circ a-b)\circ d-d}&=\overline{b\circ a\circ d-b\circ d},\\
       \overline{(a\circ b-b)\circ d -d}&= \overline{a\circ b\circ d-b\circ d},\\
       \overline{(b-b\circ a)\circ d -d}&= \overline{b\circ d-b\circ a\circ d},\\
       \overline{(b-a\circ b)\circ d -d}&= \overline{b\circ d- a\circ b\circ d}.
   \end{align*}
\end{corollary}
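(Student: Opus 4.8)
The plan is to obtain all four identities as special cases of the distributor identity established in \cref{lem:rightDistrib}, namely that $\overline{(p+q)\circ d}=\overline{p\circ d-d+q\circ d}$ in $A_+/J_+$ whenever one of $p,q,d$ lies in $I$. The preliminary observation to record is that each of the four elements $b\circ a-b$, $a\circ b-b$, $b-b\circ a$ and $b-a\circ b$ belongs to $I$ whenever $a\in I$. Indeed $b\circ a-b=\rho_b(a)$, while $a\circ b-b=a+a*b$; since $I$ is an ideal we have $a*b\in I$ and $b*a\in I$, and together with the normality of $I$ in $A_+$ this forces both $\rho_b(a)\in I$ and $a+a*b\in I$. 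The remaining two elements are the additive inverses of these, hence also lie in $I$.

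For the first identity I would set $u=b\circ a-b\in I$, so that $u+b=b\circ a$ and therefore $(u+b)\circ d=b\circ a\circ d$. Applying \cref{lem:rightDistrib} to the sum $u+b$, whose first summand lies in $I$, gives
\[
\overline{b\circ a\circ d}=\overline{u\circ d-d+b\circ d}=\overline{u\circ d-d}+\overline{b\circ d}.
\]
Right-cancelling $\overline{b\circ d}$ yields $\overline{u\circ d-d}=\overline{b\circ a\circ d-b\circ d}$, which is the first identity since $u\circ d-d=(b\circ a-b)\circ d-d$. The other three follow by the same recipe: for the second, put $u=a\circ b-b\in I$ and use $u+b=a\circ b$; for the third, put $u=b-b\circ a\in I$ and use $u+b\circ a=b$, so that \cref{lem:rightDistrib} reads $\overline{b\circ d}=\overline{u\circ d-d}+\overline{b\circ a\circ d}$; and for the fourth, put $u=b-a\circ b\in I$ and use $u+a\circ b=b$.

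The only point requiring care is that $A_+/J_+$ need not be abelian — \cref{lem:rightDistrib} guarantees only that $I_+/J_+$ is central — so the rearrangements must respect the order of the group operation. This is not a genuine obstacle, however: in each case the distributor identity takes the shape $\overline{P}=\overline{u\circ d-d}+\overline{Q}$ with $P,Q\in\{b\circ a\circ d,\ a\circ b\circ d,\ b\circ d\}$, and one recovers the claimed equality by a single right-cancellation of $\overline{Q}$, an operation valid in any group. Thus the whole content lies in the bookkeeping of writing each relevant $\circ$-product as $(\text{element of }I)+(\text{element of }A)$ composed with $d$, after which \cref{lem:rightDistrib} does the remaining work.
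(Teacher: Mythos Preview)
Your proof is correct and follows essentially the same approach as the paper: the paper's proof is the one-line hint ``Apply \cref{lem:rightDistrib} to $\overline{(b\circ a-b+b)\circ d}$ and $\overline{(a\circ b-b+b)\circ d}$ and then to $\overline{(-(b\circ a-b))\circ d}$ and $\overline{(-(a\circ b-b))\circ d}$,'' and you have carried out exactly this programme, including the careful check that the relevant summands lie in $I$ and that the group-theoretic rearrangements needed are just right-cancellations. Your decomposition $b=(b-b\circ a)+b\circ a$ for the third identity (and analogously for the fourth) is precisely what the paper's hint intends by ``apply the lemma to $\overline{(-(b\circ a-b))\circ d}$.''
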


\begin{proof}
    Apply Lemma \ref{lem:rightDistrib} to $\overline{(b\circ a-b+b)\circ d}$ and $\overline{(a\circ b-b+b)\circ d}$ and then to $\overline{(-(b\circ a-b))\circ d}$ and $\overline{(-(a\circ b-b))\circ d}$.
\end{proof}

\begin{lemma}
    Let $A$ be a skew brace, $I\subset A$ an ideal and $J=[I,A]_{\mathsf{RadRng}}$. Then $A*J\subset J$. In particular, the actions $\lambda$ and $\rho$ of $A$ factor through actions $\overline{\lambda}$ and $\overline{\rho}$ on $A_+/J_+$.
\end{lemma}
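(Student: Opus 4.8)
The plan is to obtain $A*J\subseteq J$ from the additive behaviour of the canonical action $\lambda$, and then to read off the ``in particular'' clause almost for free. The basic identity is $x*j=\lambda_x(j)-j$, valid for all $x,j\in A$; since $j\in J$ and $J_+$ is a subgroup, it shows that proving $\lambda_x(J)\subseteq J$ for every $x$ already yields $A*J\subseteq J$. As each $\lambda_x$ is an automorphism of $A_+$, hence additive, $\lambda_x(J)$ is generated by the $\lambda_x$-images of the generators of $J$, so it suffices to send each generator back into $J$.

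The additive commutators are immediate. For $a\in I$ and $b\in A$ the ideal hypothesis gives $\lambda_x(a)=(x*a)+a\in I$, so $\lambda_x(a+b-a-b)=\lambda_x(a)+\lambda_x(b)-\lambda_x(a)-\lambda_x(b)$ is again a commutator of the generating type whose first entry lies in $I$, hence an element of $J$.

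The substance of the proof is the distributor generators. Here I would expand $\lambda_x([a,b,c])$ term by term using $\lambda_x(u)=-x+x\circ u$, the associativity of $\circ$, and the compatibility condition, so as to split off a single right distributor having an argument in $I$ — a generator of $J$ — plus a conjugated remainder. Concretely, when $a\in I$, writing $\alpha=x\circ a$, $\beta=x\circ b$ and $x\circ(a+b)=\alpha-x+\beta$, the element $\iota:=\alpha-x=\rho_x(a)$ lies in $I$, and the expansion $(\iota+\beta)\circ c=[\iota,\beta,c]+\iota\circ c-c+\beta\circ c$ extracts the generator $[\iota,\beta,c]\in J$. Since $J_+$ is normal in $A_+$, this leaves, modulo $J$,
\[
\overline{\lambda_x([a,b,c])}=\overline{-x+(\iota\circ c-c)+x\circ c-\alpha\circ c+x}.
\]
Now the first identity of Corollary \ref{cor:rightDistrib}, applied with $b=x$ and $d=c$, reads $\overline{\iota\circ c-c}=\overline{\alpha\circ c-x\circ c}$, and substituting it makes the remainder telescope to $\overline{-x}+\overline{x}=0$. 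The cases in which the element of $I$ occupies the middle or the last slot are handled the same way: each time one extracts a right distributor with an argument in $I$ (such a distributor is a generator of $J$ whatever slot the $I$-argument occupies) and collapses the remainder using the appropriate identity of Corollary \ref{cor:rightDistrib} together with the right distributivity modulo $J$ of Lemma \ref{lem:rightDistrib}.

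I expect the one genuine obstacle to be this distributor computation: because $A_+/J_+$ need not be abelian, the group operations must be kept in the correct order at every step, and in each of the three positional cases one must select precisely the identity of Corollary \ref{cor:rightDistrib} that makes the leftover terms cancel. Once $\lambda_x(J)\subseteq J$ is established, the inclusion $A*J\subseteq J$ follows from $x*j=\lambda_x(j)-j$, and the final assertion is then automatic: from $\rho_x(j)=x+\lambda_x(j)-x$ and the normality of $J_+$ in $A_+$ one gets $\rho_x(J)\subseteq J$ as well, so both $\lambda$ and $\rho$ descend to actions $\overline{\lambda},\overline{\rho}$ on $A_+/J_+$.
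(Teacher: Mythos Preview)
Your proposal is correct and follows essentially the same route as the paper: reduce $A*J\subseteq J$ to $\lambda_x(J)\subseteq J$, check this on generators using additivity of $\lambda_x$, dispose of the additive commutators immediately, and for each of the three distributor cases expand $\lambda_x([a,b,c])$ and simplify via the right-distributivity identities modulo $J$ (Lemma~\ref{lem:rightDistrib} and its Corollary~\ref{cor:rightDistrib}). The paper organizes the three cases in a slightly different order and writes the expansions without your abbreviations $\iota,\alpha,\beta$, but the substance is identical; your derivation of the ``in particular'' clause from $\rho_x(j)=x+\lambda_x(j)-x$ and normality of $J_+$ is also the intended one.
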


\begin{proof}
    To see that $\lambda_a(J)\subset J$, it suffices to check on the generators of $J$. It is clear that $\lambda_d([I_+,A_+])\subset [I_+,A_+]\subset J$ for all $d\in A$. Let then $d\in A$, we must show that $\overline{\lambda_d([a,b,c])}=0$ in the quotient group $A_+/J_+$ where $a,b,c\in A$ and at least one of these elements is in $I$.

First assume that $c\in I$.
    By Lemma \ref{lem:rightDistrib},
    \begin{align*}
    \label{eq:noname2}
        \overline{\lambda_d([a,b,c])} &= \overline{(d\circ a+\lambda_d(b))\circ c -d\circ b\circ c+d\circ c-d\circ a\circ c}\\
        &= \overline{(d\circ a+\lambda_d(b) -d\circ b+d-d\circ a)\circ c-c}\\
        % &=\overline{d\circ a\circ c}-\overline{c}+ \overline{\lambda_d(b)\circ c} -\overline{d\circ b\circ c}+\overline{d\circ c}-\overline{d\circ a\circ c}\\
        % &=\overline{d\circ a\circ c}-\overline{c}+ \overline{c-d\circ c+d\circ b\circ c} -\overline{d\circ b\circ c}+\overline{d\circ c}-\overline{d\circ a\circ c}\\
        &=0.
    \end{align*}
    Now, if we assume that $b\in I$, then  
    \[ 
    -c+\lambda_d(b)\circ c\in I,\quad 
    -d\circ b\circ c +d\circ c=\lambda_d(-b\circ c+c)\in I. 
    \]
    Thus 
    \begin{align*}
        \overline{\lambda_d([a,b,c])} &=\overline{d\circ a\circ c}-\overline{c}+ \overline{\lambda_d(b)\circ c} -\overline{d\circ b\circ c}+\overline{d\circ c}-\overline{d\circ a\circ c}\\
        &= -\overline{c}+\overline{\lambda_d(b)\circ c}-\overline{d\circ b\circ c}+\overline{d\circ c}\\
        &= -\overline{d\circ b\circ c}+\overline{d\circ c}-\overline{c}+\overline{\lambda_d(b)\circ c}\\
        &=-\overline{d\circ b\circ c}+\overline{(d+\lambda_d(b))\circ c}\\
        &=0.
    \end{align*}
    Similarly, if we assume that $a\in I$, 
    \begin{align*}
        \overline{\lambda_d([a,b,c])} &= \overline{(d\circ a -d)\circ c}-\overline{c}+\overline{d\circ c}-\overline{d\circ a\circ c}\\
        &= \overline{(d\circ a -d+d)\circ c}-\overline{d\circ a\circ c}\\
        &=0.\qedhere 
    \end{align*}
\end{proof}
\begin{proposition}\label{RadIdeal}
    Let $A$ be a skew brace, and $I\subset A$ an ideal. The subgroup $[I,A]_{\mathsf{RadRng}}$ of $A_+$ is an ideal of $A$.
\end{proposition}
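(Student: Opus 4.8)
The remaining content of the statement is the verification that $J=[I,A]_{\mathsf{RadRng}}$ satisfies the three defining conditions of an ideal. Normality of $J$ in $A_+$ was already recorded in the remark above, and the inclusion $A*J\subseteq J$ is exactly the previous lemma. Hence the plan is to prove the one missing condition, namely $J*A\subseteq J$, i.e.\ that $x*a\in J$ for all $x\in J$ and $a\in A$. Since $x*a=\lambda_x(a)-a=-x+x\circ a-a$ and $\overline{x}=0$ in $A_+/J_+$, this is equivalent to the statement that $\overline{x\circ a}=\overline{a}$ in $A_+/J_+$ for every $x\in J$ and every $a\in A$; in other words, that the image of $x$ acts trivially by left circle-translation modulo $J$.

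The difficulty is that $*$ is not additive in its first variable, so one cannot test the condition $x*a\in J$ directly on an additive generating set of $J$. To get around this I would introduce the set $N=\{x\in I \mid x*a\in J \text{ for all } a\in A\}$ and first show that $N$ is an additive subgroup of $I_+$. The key computation is that, for $x,y\in I$, one has $[x,y,a]\in J$, so Lemma~\ref{lem:rightDistrib} gives $\overline{(x+y)\circ a}=\overline{x\circ a}-\overline{a}+\overline{y\circ a}$, and a short rearrangement using the centrality of $I_+/J_+$ in $A_+/J_+$ yields
\[
\overline{(x+y)*a}=-\overline{y}+\overline{x*a}+\overline{y}+\overline{y*a}.
\]
From this identity closure of $N$ under addition is immediate, and closure under negation follows by taking $y=-x$ (so that the left-hand side is $\overline{0*a}=0$). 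Thus $N$ is an additive subgroup of $I_+$, and it suffices to prove that every additive generator of $J$ lies in $N$.

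This last step is the main obstacle. For a generator $g$ of $J$ one has $\overline{g}=0$, so the condition $g\in N$ reduces to $\overline{g\circ a}=\overline{a}$ for all $a\in A$, and this must be checked separately for the right-distributor generators $[a,b,c]$, $[c,a,b]$, $[b,c,a]$ (with $a\in I$) and for the additive commutators $a+b-a-b$ (with $a\in I$). In each case the computation amounts to expanding $g\circ a$ by distributing the circle operation over the additive expression defining $g$; since one entry always lies in $I$, the expansion is governed by Lemma~\ref{lem:rightDistrib} and especially by the four identities of Corollary~\ref{cor:rightDistrib}, which convert the resulting binomials of the form $(\cdots\circ\cdots-\cdots)\circ a-a$ into differences of triple circle-products that cancel modulo $J$. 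Once all generator types are verified, we obtain $J\subseteq N$, hence $J*A\subseteq J$; combined with the normality of $J$ in $A_+$ and with $A*J\subseteq J$, this proves that $J$ is an ideal of $A$.
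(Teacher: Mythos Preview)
Your proposal is correct and follows essentially the same route as the paper's proof. Both arguments reduce the missing condition $J*A\subseteq J$ to checking $\overline{g\circ a}=\overline{a}$ for each additive generator $g$ of $J$, and both carry out these checks using Lemma~\ref{lem:rightDistrib} and Corollary~\ref{cor:rightDistrib}. The one genuine addition in your write-up is that you make the reduction step rigorous: the paper simply asserts ``since $J\subset I$, it is sufficient to show this on the generators of $J$'', whereas you introduce the set $N=\{x\in I\mid x*a\in J\text{ for all }a\}$ and verify, via the identity $\overline{(x+y)*a}=-\overline{y}+\overline{x*a}+\overline{y}+\overline{y*a}$, that $N$ is an additive subgroup. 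This is a real improvement in clarity, since $*$ is not additive in its first variable and the reduction is not automatic. Conversely, the paper carries out the generator computations in full detail (and they are somewhat delicate, especially for the distributor generators, where one also uses that $\overline{\rho}$ is well-defined on $A_+/J_+$), while you only sketch them; but your sketch correctly identifies the mechanism, so there is no gap.
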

\begin{proof}
    Let $J=[I,A]_{\mathsf{RadRng}}$. 
    It is left to show that $J*A\subset J$. It suffices to show that $k\circ b-b\in J$ for all $k\in J$ and $b\in A$. Since $J\subset I$, it is sufficient to show this on the generators of $J$. Let $a\in I$ and $b,c\in A$. By Lemma \ref{lem:rightDistrib}, 
    \[
        \overline{(b-a)\circ c}=\overline{(b-a-b+b)\circ c}=\overline{(b-a-b)\circ c}-\overline{c}+\overline{b\circ c}.
    \]
    Thus 
    \begin{align*}
    \overline{(b-a-b)\circ c-c}&=  \overline{(b-a)\circ c} - \overline{b\circ c } \\ 
    & = \overline{b \circ c - c + (-a) \circ c } - \overline{ b \circ c} \\ &= \overline{b\circ c}-\overline{a\circ c}+\overline{c}-\overline{b\circ c} \\
    &= - \overline{a\circ c} + \overline{c}.
     \end{align*}
    Hence 
    \begin{align*}
        \overline{[a,b]_+\circ c-c}&= \overline{a\circ c-c + (b-a-b)\circ c-c}=0.
    \end{align*}
    Let $d\in A$. By using Corollary \ref{cor:rightDistrib}, 
    %. We have that
    \begin{align*}
        \overline{[a,b,c]\circ d-d}&=\overline{(b\circ((b'\circ a-b')\circ c -c)-b +c -a\circ c)\circ d}-\overline{d}\\
        &= \overline{(b\circ((b'\circ a-b')\circ c -c)-b)\circ d}-\overline{d} +\overline{(c -a\circ c)\circ d}-\overline{d}\\
        &= \overline{b\circ((b'\circ a-b')\circ c -c)\circ d}-\overline{b\circ d}+\overline{c\circ d -a\circ c\circ d}\\
        &= \overline{b\circ((b'\circ a-b')\circ c -c)\circ d-b}+\overline{b}-\overline{b\circ d}+\overline{c\circ d -a\circ c\circ d}.
    \end{align*}
    We have shown earlier that the action $\rho$ of $A$ factors through an action $\overline{\rho}$ on $A_+/J_+$. Thus  
   \begin{align*}
        \overline{[a,b,c]\circ d-d}&= \overline{\rho}_b\left(\overline{(b'\circ a-b')\circ c -c)\circ d}\right)+\overline{b}-\overline{b\circ d}+\overline{c\circ d -a\circ c\circ d}\\
        &=\overline{\rho}_b\left(\overline{(b'\circ a-b')\circ c\circ d -c\circ d+d}\right)+\overline{b}-\overline{b\circ d}+\overline{c\circ d -a\circ c\circ d}\\
        &=\overline{\rho}_b\left(\overline{b'\circ a\circ c\circ d-b'\circ c\circ d+d}\right)+\overline{b}-\overline{b\circ d}+\overline{c\circ d -a\circ c\circ d}\\
        &=\overline{a\circ c\circ d}-\overline{c\circ d}+\overline{b\circ d}-\overline{b\circ d}+\overline{c\circ d -a\circ c\circ d}\\
        &=0. 
    \end{align*}
    To obtain the second and third equality, we used Corollary \ref{cor:rightDistrib}. Assume now that $b\in I$ and $a,c,d\in A$. 
    % (This situation is similar to the previous one, except that now we use the fact that multiplying and subtracting on the right by $d$ preserves elements of the commutator subgroup $[I_+,A_+]$.)
    Then
    \begin{align*}
        \overline{[a,b,c]\circ d-d}&=\overline{\left((a+b)\circ c-a\circ c -b\circ c +c +[a\circ c, -c+b\circ c]_+\right)\circ d}-\overline{d}\\
        &= \overline{\left((a+b)\circ c-a\circ c -b\circ c +c\right)\circ d}-\overline{d}\\
        &= \overline{\left(a\circ\left((-a'+a'\circ b)\circ c-c\right)-a-b\circ c+c\right)\circ d}-\overline{d}\\
        &=0.
    \end{align*}
    Finally, assume that $c\in I$ and $a,b,d\in A$. In this case,
    \begin{align*}
         \overline{[a,b,c]\circ d-d}&=\overline{\left(\left((a+b)\circ c-b-a\right)+a+b -b\circ c +c -a\circ c\right)\circ d}-\overline{d}\\
         &= \overline{\left(\left((a+b)\circ c-b-a\right)+b -b\circ c +c +a-a\circ c\right)\circ d}-\overline{d}\\
         &=\overline{[a,b,c\circ d-d]}\\
         &=0
    \end{align*}
    by Lemma \ref{lem:rightDistrib} and Corollary \ref{cor:rightDistrib}.
    %After distributing the $d$ using Lemma \ref{lem:rightDistrib} we obtain $0$. 
    % Then it is clear that $[a,b]_+\circ c-c\in J$ for all $a\in J$ and $b,c\in A$. It is left to show that $[a,b,c]\circ d-d\in A_R$ for all $b,c,d\in A$. Again by Lemma \ref{lem:rightDistrib},
    % \[
    %     [a,b,c]\circ d-d=u+(b+c)\circ a\circ d-(b\circ a\circ d-a\circ d+c\circ a\circ d).\qedhere
    % \]
\end{proof}
\begin{definition}
\label{def:radicalator}
    The \emph{radicalator} of a skew brace $A$ is the ideal  
    $A_R=[A,A]_{\mathsf{RadRng}}$.
\end{definition}
    It is now clear that the reflector $F: \mathsf{SKB}\to \mathsf{RadRng}$ in \eqref{Radical} is the functor sending the skew brace $A$ to the quotient $F(A) = A/A_R$ by the radicalator $A_R$ of $A$ (with obvious definition on morphisms).

\begin{definition}
    For a skew brace $A$, let
    \[
    Z_R(A)=\lbrace z\in Z(A_+): [z,b,c]=[b,z,c]=[b,c,z]=0 \text{ for all } b,c\in A\rbrace.
    \]
\end{definition}

% \begin{remark}
% \label{rem:Z_R}
%     Let $A$ be a skew brace and $z\in Z_R(A)$. Then  
%     \[
%     z\circ b-b+a\circ b=a\circ b-b+z\circ b
%     \]
%     for all $a,b\in A$. This follows since 
%     $[z,a,b]=[a,z,b]=0$ and $z$ is central in the additive group of $A$. 
% \end{remark}

\begin{lemma}
\label{lem:Z_R}
    Let $A$ be a skew brace and $z\in Z_R(A)$.
    Then 
    \begin{align}
        \label{eq:formulas}[z+a,b,c] =[a,z+b,c]=[a,b,z+c]= [a,b,c]
    \end{align}
    for all $a,b,c\in A$. 
\end{lemma}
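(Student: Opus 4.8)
The plan is to unpack the hypothesis $z \in Z_R(A)$ into the additive centrality $z \in Z(A_+)$ together with the three vanishing conditions $[z,b,c]=[b,z,c]=[b,c,z]=0$, rewritten in the usable forms
$(z+b)\circ c = z\circ c - c + b\circ c$, $\ (b+z)\circ c = b\circ c - c + z\circ c$ and $(a+b)\circ z = a\circ z - z + b\circ z$ for all $a,b,c\in A$, and then to establish each of the three equalities in \eqref{eq:formulas} separately by substituting these relations into the defining expression of the right distributor. Since $A_+$ need not be abelian, the naive cancellations will leave conjugations behind; the heart of the argument is to show that the relevant ``error'' terms are additively central, so that those conjugations become trivial.

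Two auxiliary centrality facts will drive the proof. First, I would show that $z\circ c - c \in Z(A_+)$ for every $c$: combining the first two conditions with $z+b=b+z$ gives $z\circ c - c + b\circ c = b\circ c - c + z\circ c$ for all $b$; taking $b=c'$ (so that $b\circ c = 0$) shows that $z\circ c$ commutes additively with $c$, and feeding this back shows that $z\circ c - c$ commutes with $b\circ c$, hence with every element of $A$ as $b\circ c$ runs over $A$. Second, I would invoke the fact that $Z(A_+)$ is a strong left ideal (the Example of Section~2): since $a*z = \lambda_a(z) - z$ then lies in $Z(A_+)$ and $z\in Z(A_+)$, the element $\lambda_a(z) = -a + a\circ z$ is additively central for every $a$.

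With these in hand each identity follows by a direct expansion. For $[z+a,b,c]$ I would rewrite $(z+a+b)\circ c$ and $(z+a)\circ c$ by the first condition, which reorganizes the expression into $(z\circ c - c) + [a,b,c] - (z\circ c - c)$; centrality of $z\circ c - c$ then gives $[z+a,b,c]=[a,b,c]$. For $[a,z+b,c]$ I would instead use $z\in Z(A_+)$ to move $z$ to the right and apply the second condition to $(a+b+z)\circ c$ and $(b+z)\circ c$; here the two copies of $z\circ c$ cancel directly, so no appeal to the auxiliary central elements is needed and one reads off $[a,b,c]$. For $[a,b,z+c]$, the delicate case, I would use $z+c=c+z$ and compatibility to write $x\circ(z+c)=x\circ c - x + x\circ z$, simplify $(a+b)\circ z$ by the third condition, and then substitute $a\circ z = a + \lambda_a(z)$; the additive terms in $a$ cancel, and using that $\lambda_a(z)$ is central together with $z\in Z(A_+)$, the remaining conjugations collapse to leave exactly $[a,b,c]$.

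The main obstacle is precisely this third identity, and more generally the failure of $A_+$ to be abelian: every expansion produces terms in the ``wrong'' order, and the computation only closes once one knows that the two auxiliary quantities $z\circ c - c$ and $\lambda_a(z)=-a+a\circ z$ are genuinely central. Establishing these two facts --- the first from the defining relations of $Z_R(A)$, the second from the strong-left-ideal property of the additive centre --- is the crux; after that, each of the three displayed equalities reduces to a bookkeeping calculation.
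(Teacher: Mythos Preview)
Your proposal is correct and follows essentially the same route as the paper. Both arguments expand the distributor term by term using the vanishing conditions $[z,b,c]=[b,z,c]=[b,c,z]=0$, and both hinge on the same auxiliary centrality fact for the delicate third identity: the paper states and uses that $a\circ z-a=\lambda_a(z)$ and $b\circ z-b=\lambda_b(z)$ lie in $Z(A_+)$, exactly as you do via the strong-left-ideal property of $Z(A_+)$.

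The one cosmetic difference is in the first identity. You isolate the lemma $z\circ c-c\in Z(A_+)$ (derived from equating $(z+b)\circ c$ with $(b+z)\circ c$) and use it to kill the conjugation $(z\circ c-c)+[a,b,c]-(z\circ c-c)$. The paper instead commutes $z$ past $a+b$ first, expands via $[a+b,z,c]=0$ and $[a,z,c]=0$, and then cancels by recognizing $(b+z)\circ c-(z+b)\circ c=0$; unwinding that recognition is precisely your centrality statement for $z\circ c-c$. So the two computations are the same in substance, with your version making the governing central element explicit.
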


\begin{proof}
    Let $a,b,c\in A$. To prove 
    $[z+a,b,c]=[a,b,c]$, we use that $z\in Z(A_+)$ and compute
    \begin{align*}
        [z+a,b,c] &= (z+a+b)\circ c-b\circ c+c-(z+a)\circ c\\
        &=\left((a+b)+z\right)\circ c-b\circ c+c-(a+z)\circ c\\
        &=(a+b)\circ c-c+z\circ c-b\circ c+c-(a\circ c-c+z\circ c )\\
        &=[a,b,c]+a\circ c-c+b\circ c-c+z\circ c-b\circ c+c-z\circ c+c-a\circ c\\
        &=[a,b,c]+a\circ c-c+(b+z)\circ c-(z+b)\circ c+c-a\circ c\\
        &=[a,b,c].
    \end{align*}

    A similar calculation shows that 
    \[
    [a,z+b,c]=[a,b,c].
    \]

    Finally, for the remaining equality, we
    note that both 
    $a\circ z-a=\lambda_a(z)$ and  
    $b\circ z-b=\lambda_b(z)$ are central 
    elements in the additive group of $A$. Then, using the skew brace compatibility condition,  
    \begin{align*}
        [a,b,z+c] &= (a+b)\circ (z+c)-b\circ (z+c)+(z+c)-a\circ (z+c)\\
        %&=(a+b)\circ z-(a+b)+(a+b)\circ c
        %-(b\circ z-b+b\circ c)+(z+c)-(a\circ z-%a+a\circ c)\\
        &=(a\circ z-a)+(a+b)\circ c-b\circ c+c-a\circ c+(a-a\circ z)\\
        &=(a+b)\circ c-b\circ c+c-a\circ c\\
        &=[a,b,c].\qedhere 
    \end{align*}
\end{proof}

% \begin{remark}
%     Let $k\in Z_R(A)$. We claim that $[b,k,c]=0$ for all $b,c\in A$. 
%     % Since $(k+b)\circ c=(b+k)\circ c$ holds for all $b,c\in A$, we obtain that 
%     % \[
%     % k\circ -c+b\circ b=b\circ c-c+k\circ c
%     % \]
%     % for all $b,c\in A$. In particular, taking $b=c'$ we get that 
%     % $k\circ c-c=-c+k\circ c$ holds for all $c\in A$. 
%     Indeed, for all $a,c\in A$,
%     \begin{align*}
%     k\circ c-c+a&=(a\circ c'+k)\circ c\\
%     &= a-c+k\circ c\\
%     &=a+(c'+k)\circ c\\
%     &=a+(k+c')\circ c\\
%     &=a+k\circ c-c.   
%     \end{align*}
%     Thus $k\circ c-c=-c+k\circ c\in Z(A_+)$. This implies that, for all $b,c\in A$, we have 
%      \begin{align*}
%          [b,k,c]&=(b+k)\circ c -k\circ c +c -b\circ c\\
%          &= (k+b)\circ c -b\circ c-k\circ c+c\\
%          &= [k,b,c]\\
%          &=0.
%      \end{align*}
% \end{remark}

\begin{proposition}
    Let $A$ be a skew brace. Then $Z_R(A)$ is a subbrace of $A$. 
    %\[
    %Z_R(A)=\lbrace k\in Z(A_+): [k,b,c]=[b,c,k]=0\quad \text{for all}\quad b,c\in A\rbrace
    %\]
    %is a subbrace of $A$.
\end{proposition}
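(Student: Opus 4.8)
The plan is to show that $Z_R(A)$ is at once a subgroup of $(A,+)$ and of $(A,\circ)$; since the compatibility condition is then automatically inherited, this is exactly what it means to be a subbrace. The additive part is where Lemma \ref{lem:Z_R} does all the work. First $0\in Z_R(A)$, since $0\in Z(A_+)$ and $[0,b,c]=[b,0,c]=[b,c,0]=0$ follow directly from the definition of the right distributor together with $x\circ 0=x$. For $z_1,z_2\in Z_R(A)$ the sum lies in $Z(A_+)$, and applying Lemma \ref{lem:Z_R} in each slot (e.g. $[z_1+z_2,b,c]=[z_2,b,c]=0$, and similarly in the middle and right positions) gives $z_1+z_2\in Z_R(A)$. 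For the additive inverse I would apply Lemma \ref{lem:Z_R} to $z+(-z)=0$, so that $[-z,b,c]=[z+(-z),b,c]=[0,b,c]=0$ and likewise in the other two slots.

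The key is an auxiliary centrality lemma that I would isolate first: for $z\in Z_R(A)$ one has $z\circ c-c\in Z(A_+)$ for every $c$. To prove it, I combine the first-slot identity $(z+b)\circ c=z\circ c-c+b\circ c$ with the second-slot identity $(b+z)\circ c=b\circ c-c+z\circ c$; since $z$ is additively central the two left-hand sides coincide, giving $z\circ c-c+b\circ c=b\circ c-c+z\circ c$. As $b$ ranges over $A$ the element $b\circ c$ ranges over all of $A$ (right $\circ$-translation is a bijection), so taking $b\circ c=0$ yields $z\circ c-c=-c+z\circ c$, and feeding this back shows that $z\circ c-c$ commutes with every element, i.e. is central. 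In particular $z*c=-z+(z\circ c-c)\in Z(A_+)$ as well.

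With this in hand I would attack the multiplicative closure. For $z_1,z_2\in Z_R(A)$ put $w=z_1\circ z_2=z_1+(z_1*z_2)+z_2$; the centrality lemma makes $z_1*z_2$, hence $w$, additively central. The third-slot condition $[b,c,w]=0$ follows from associativity together with the additivity coming from $[b,c,z_1]=0$ and $[b,c,z_2]=0$: rewriting $x\circ w=(x\circ z_1)\circ z_2$ and expanding, all the terms telescope. For the first-slot condition I would use the rewriting $w+b=z_1\circ\bigl(z_2+\lambda_{z_1}^{-1}(b)\bigr)$ (a consequence of the compatibility condition), so that associativity gives $(w+b)\circ c=z_1\circ\bigl((z_2+\lambda_{z_1}^{-1}(b))\circ c\bigr)$; applying the first-slot identities of $z_2$ and $z_1$ and the centrality lemma collapses this to $(w\circ c-c)+b\circ c$, which is precisely $[w,b,c]=0$ and simultaneously exhibits $w\circ c-c$ as central. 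Once $w$ is additively central, $w\circ c-c$ is central, and the first-slot identity holds, the second-slot condition $[b,w,c]=0$ is automatic, because under these hypotheses the first- and second-slot identities are equivalent. For the $\circ$-inverse $z'=-\lambda_z^{-1}(z)$ (additively central since $\lambda_z^{-1}$ preserves $Z(A_+)$) I would argue analogously: the third-slot condition holds because $x\mapsto x\circ z-z$ is an additive automorphism of $A_+$ with inverse $x\mapsto x\circ z'-z'$; the centrality lemma for $z'$ comes from $z\circ(z'\circ c)=c$ applied to the centrality lemma for $z$; and one first- or second-slot condition is obtained by multiplying the desired identity on the left by $z$, which (via $z\circ z'=0$) eliminates $z'$ and reduces the claim to an identity in $z$, the remaining slot condition being automatic as before.

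The hard part will be the first-slot distributor conditions for the product and for the inverse. Unlike the third slot, these cannot be handled by associativity alone, and the non-commutativity of $(A,+)$ makes the bookkeeping delicate. The centrality lemma is precisely what tames this: by forcing all the correction terms $z\circ c-c$ and $z*c$ into the abelian center $Z(A_+)$, it lets the otherwise tangled expressions telescope, and it also produces the equivalence of the first- and second-slot conditions that halves the remaining verifications.
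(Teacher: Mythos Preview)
Your argument is correct. The additive part is identical to the paper's: both invoke Lemma~\ref{lem:Z_R} in exactly the way you describe. For the multiplicative part, however, the two proofs diverge in style.

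The paper does \emph{not} separate product and inverse. Instead, for $k,k_1\in Z_R(A)$ it works directly with the ``quotient'' $k\circ k_1'$ and checks the three distributor conditions by hand. The trick throughout is to multiply on the right by $k_1$ (using that $[\,\cdot\,,\,\cdot\,,k_1]=0$ makes $x\mapsto x\circ k_1-k_1$ additive) so as to eliminate the unknown element $k_1'$ and reduce everything to identities involving only $k$ and $k_1$. For instance, $k\circ k_1'+b$ is rewritten as $(k-k_1+b\circ k_1)\circ k_1'$, and then $(k\circ k_1'+b)\circ c$ is computed in one line to be $k\circ k_1'\circ c-c+b\circ c$. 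This is shorter but rather opaque: each step hides a nontrivial use of one of the $[\,\cdot\,,\,\cdot\,,\,\cdot\,]=0$ hypotheses.

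Your route is longer but structurally more transparent. The auxiliary ``centrality lemma'' $z\circ c-c\in Z(A_+)$ is never isolated in the paper, yet it is exactly what makes all your telescoping work, and it gives for free the equivalence of the first- and second-slot conditions once additive centrality is known---so you only ever have to check one of the two. Two small points: (i) in the first-slot computation for $w=z_1\circ z_2$ you do not actually need the centrality lemma (the skew brace identity $\lambda_{z_1}(u-c+v)=\lambda_{z_1}(u)-\lambda_{z_1}(c)+\lambda_{z_1}(v)$ plus the first-slot identities for $z_1,z_2$ suffice), and the centrality of $w\circ c-c$ is most cleanly seen as $(z_1\circ(z_2\circ c)-z_2\circ c)+(z_2\circ c-c)$, a sum of two central elements, rather than as a by-product of that computation; (ii) the first-slot verification for $z'$ is the one genuinely delicate step in your sketch---when you ``multiply the desired identity on the left by $z$'' you will need to evaluate $(-z)\circ c$, and this uses $-z\in Z_R(A)$ (which you have) via $(z+(-z))\circ c=c$, giving $(-z)\circ c=c-z\circ c+c$. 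With that in hand the reduction goes through.
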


\begin{proof}
    % We first show that it is a normal subgroup of the additive group. 
    %As $Z_R(A)\subset Z(A_+)$ 
    It follows immediately from 
    Lemma \ref{lem:Z_R} that 
    $Z_R(A)$ is a subgroup of $Z(A_+)$.

    We now prove that 
    $Z_R(A)$ is a subgroup of $A_\circ$.
    %It is left to prove that it is a subgroup of the multiplicative group. 
    Let $k,k_1\in Z_R(A)$ and $b,c\in A$. First,
    \begin{align*}
        k\circ k_1'+b &=(k-k_1+b\circ k_1)\circ  k_1'\\
        &= (b\circ k_1-k_1+k)\circ k_1'=b+k\circ k_1'.
    \end{align*}
    Hence, $k\circ k_1'\in Z(A_+)$. Then,
    % First notice that $[k',b,c]=0$. Indeed,
    % \[
    % (k'+b)\circ c =k'\circ (-k+k\circ b)\circ c= k'\circ (c-k\circ c+k\circ b\circ c)=k'\circ c -c+b\circ c.
    % \]
    % This implies that
    \begin{align*}
    (k\circ k_1'+b)\circ c&=(k-k_1+ b\circ k_1)\circ (k_1'\circ c)\\
    &=k\circ k_1'\circ c -c+b\circ c.
    \end{align*}
    Therefore $[k'\circ k_1,b,c]=0$. Similarly one sees that $[b,k'\circ k_1,c]=0$.
    Finally,
    \[
    (b\circ k\circ k_1'-k\circ k_1'+c\circ k\circ k_1')\circ k_1=b\circ k-k+c\circ k=(b+c)\circ k.  
    \]
    Thus
    \[
    b\circ k\circ k_1'-k\circ k_1'+c\circ k\circ k_1' = (b+c)\circ k\circ k_1'.
    \]
    This implies that $[b,c,k\circ k_1']=0$, and
    the proof is concluded.
    %This concludes the proof as it implies that $[b,c,k\circ k_1']=0$.
\end{proof}

\begin{theorem}\label{Central-Radical}
    An extension $(A,f)$ in $\mathsf{SKB}$ is central with respect to the adjunction \eqref{Radical} if and only if $\Ker(f)\subset Z_R(A)$ or equivalently if $[\Ker(f),A]_{\mathsf{RadRng}}=0$.
\end{theorem}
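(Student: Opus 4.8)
The plan is to verify the categorical centrality condition $R(s)=R(t)$ directly on generators, using that for the reflection \eqref{Radical} the endofunctor $R$ sends a skew brace to its radicalator, so $R(A\times_B A)=[A\times_B A,A\times_B A]_{\mathsf{RadRng}}$ and $R(s)$, $R(t)$ are the restrictions of the projections $s$, $t$ to this subobject. First I would dispose of the ``equivalently'' by comparing definitions: unfolding $[\Ker(f),A]_{\mathsf{RadRng}}=0$ says that every generator $[a,b,c]$, $[c,a,b]$, $[b,c,a]$ and $a+b-a-b$ (with $a\in\Ker(f)$, $b,c\in A$) vanishes; the vanishing of the $a+b-a-b$ is exactly $\Ker(f)\subseteq Z(A_+)$, while the vanishing of the three families of distributors says exactly that every $z\in\Ker(f)$ satisfies $[z,b,c]=[b,z,c]=[b,c,z]=0$. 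Comparing with the definition of $Z_R(A)$, this is precisely $\Ker(f)\subseteq Z_R(A)$. Hence it suffices to prove that centrality is equivalent to $[\Ker(f),A]_{\mathsf{RadRng}}=0$.

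For the ``only if'' direction I would assume $(A,f)$ central, form the pullback \eqref{central-ext}, and produce for each generator of $[\Ker(f),A]_{\mathsf{RadRng}}$ a witness in $R(A\times_B A)$ on which $s$ and $t$ take, respectively, the value of that generator and $0$. For $a\in\Ker(f)$ the pair $(a,0)$ lies in $A\times_B A$, and combined with the diagonal elements $(b,b)$, $(c,c)$ it produces the radicalator generators $([a,b,c],[0,b,c])$, $([c,a,b],[c,0,b])$, $([b,c,a],[b,c,0])$ and the additive commutator $(a+b-a-b,\,0)$. The crux is that each second coordinate is a \emph{degenerate} distributor: using that the neutral element $0$ satisfies $x\circ 0=x=0\circ x$, a one-line computation gives $[0,b,c]=[c,0,b]=[b,c,0]=0$. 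Thus each witness has $t$-image $0$, so $R(s)=R(t)$ forces $s$ to vanish on it, i.e. the corresponding generator is $0$; since the generators are arbitrary, $[\Ker(f),A]_{\mathsf{RadRng}}=0$.

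For the ``if'' direction I would assume $\Ker(f)\subseteq Z_R(A)$ and check $R(s)=R(t)$ on the generators of the radicalator of $A\times_B A$. A distributor generator has the form $[(p,p'),(q,q'),(r,r')]$ with $f(p)=f(p')$, $f(q)=f(q')$, $f(r)=f(r')$, so $-p+p',\,-q+q',\,-r+r'\in\Ker(f)\subseteq Z_R(A)$; writing $p'=p+k_1$, $q'=q+k_2$, $r'=r+k_3$ with $k_i\in Z_R(A)$ (central in $A_+$, whence $p+k_i=k_i+p$), three successive applications of Lemma \ref{lem:Z_R} give $[p',q',r']=[p,q,r]$, so $s$ and $t$ agree on this generator. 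For an additive commutator generator, centrality of the $k_i$ in $A_+$ immediately yields that the two coordinates coincide. Therefore $R(s)=R(t)$ and $(A,f)$ is central.

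The genuinely hard work is concentrated in Lemma \ref{lem:Z_R}, which is already at our disposal; granting it, the remaining obstacles are organizational rather than deep. The two points to handle with care are the reduction itself---identifying $R(A\times_B A)$ with the radicalator of the pullback so that centrality becomes a check on distributor and additive-commutator generators---and the bookkeeping in the possibly non-abelian additive group, namely confirming that the degenerate distributors with a $0$-slot really vanish and that the chosen witnesses genuinely lie in $R(A\times_B A)$ (i.e.\ are radicalator generators of $A\times_B A$, not merely elements of $A\times_B A$).
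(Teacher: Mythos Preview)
Your proposal is correct and follows essentially the same route as the paper's proof: in both directions the argument reduces to checking $R(s)=R(t)$ on the generating distributors and additive commutators of $[A\times_B A,A\times_B A]_{\mathsf{RadRng}}$, using the element $(a,0)$ together with diagonal pairs for the forward implication and three successive applications of Lemma~\ref{lem:Z_R} for the converse. The only cosmetic difference is that you make the verification of the degenerate distributors $[0,b,c]=[c,0,b]=[b,c,0]=0$ explicit, whereas the paper suppresses it.
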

\begin{proof}
Assume that $f \colon A \rightarrow B$ is an extension in $\mathsf{SKB}$ that is central with respect to the adjunction \eqref{Radical}. We write $$A \times_B A = \{ (a_1,a_2) \in A \times A \, \mid \, f(a_1) = f(a_2)\}$$ for the ``object part'' of the pullback in diagram \eqref{central-ext}.
 Let us first prove that $a+b-a-b=0$ for any $a \in \Ker(f)$, $b\in A$. The elements $(a,0)$ and $(b,b)$  are in $A \times_B A$, hence
\begin{align*}
    (a,0) + (b,b) - (a,0) -(b,b) & = (a+b-a-b, b-b)\\
    &= (a+b-a-b,0)\in R(A \times_B A). 
    \end{align*}
By applying the restrictions $R(s)$ and $R(t)$ of the pullback projections we get
    $$a+b-a-b = R(s)(a+b-a-b,0)= R(t) (a+b-a-b,0) =0.$$
    Let then $a\in \Ker(f)$ and $b,c \in A$. We claim that
    $[a,b,c]=0$. The elements $(a,0)$, $(b,b)$ and $(c,c)$ are all in $A \times_B A$, so that 
    \begin{align*}
     ((a,0) + (b,b) ) \circ (c,c) &-(b,b) \circ (c,c) + (c,c) - (a,0) \circ (c,c)     \\
        & = (a+b, b) \circ (c,c) - (b\circ c, b\circ c) +(c,c) -(a \circ c, c) \\
        & = ((a+b)\circ c -b \circ c +c - a \circ c, b \circ c -b \circ c +c - c) \\
        & = ([a,b,c], 0) \, \in R( A \times_B A).
        \end{align*}
  The assumption that $R(s)=R(t)$ gives $$[a,b,c]= R(s)([a,b,c], 0) =  R(t)([a,b,c], 0)=0.$$
    By a similar argument one can check that $[c,a,b]=0$ and $[b,c,a]=0$
for any $a\in \Ker(f)$ and $b,c \in A$. This proves the first implication.
    
    Conversely, assume now that $[\Ker(f),A]_{\mathsf{RadRng}}=0$. 
    Let $(a,a+k)\in A\times_B A$, $(a_1,a_1+k_1)\in A\times_B A$ and $(a_2,a_2+k_2)\in A\times_B A$ be such that $k,k_1,k_2\in \Ker(f)$. 
   It will suffice to  prove that the equality $R(s) = R(t)$ holds for all the generators of $R(A \times_B A) = [A\times_B A,A \times_B A]_{\mathsf{RadRng}}$. For this, observe that the assumption $\Ker(f) \subset Z(A_+)$ directly implies that
   \begin{align*}
       [a+k, a_1+k_1] &= a+k + a_1 + k_1 - (a+k) - (a_1 + k_1) \\
       &= a + a_1 -a-a_1 \\
       &= [a, a_1].
   \end{align*}
   Then, as a consequence of Lemma \ref{lem:Z_R}, we obtain the equality 
   \[
   [a+k,a_1+k_1,a_2+k_2] = [a,a_1,a_2],
   \]
   which concludes the proof.
\end{proof}
\begin{remark}\label{RemBraces} The arguments used in the proof of Theorem \ref{Central-Radical} also provide a characterization of central extensions of skew left braces with respect to the subvariety $\mathsf{BR}$ of braces \cite{Rump}, as we now briefly explain. The subvariety $\mathsf{BR}$ of $\mathsf{SKB}$ is determined by the additional identity $a+b-a-b =0$, since braces are precisely left skew braces of abelian type \cite{MR3647970}. We then have the following adjunction 
\begin{equation}\label{Braces}
\begin{tikzcd}
	{\mathsf{BR}} & \perp & {\mathsf{SKB}}
	\arrow["{U_B}"', shift right=3, from=1-1, to=1-3]
	\arrow["\textsf{br}"', shift right=3, from=1-3, to=1-1]
\end{tikzcd}
% \xymatrix@=20pt{ {\mathsf{BR} \, } \ar@<-1ex>[r]_-{U_B}^-{\perp} & {\, \mathsf{SKB} \, }
% \ar@<-2ex>[l]_-{\mathsf{br}}}
\end{equation}
where $U_B$ is the inclusion functor and $\mathsf{br}$ its left adjoint sending a skew brace $A$ to the quotient $\mathsf{br}(A) = \frac{A}{ [A,A]_{\mathsf{BR}} }$, with $[ A,A]_{\mathsf{BR}}$ the \emph{ideal} of $A$ generated by all the commutators $[a,b] = a+b -a-b$ (for any $a,b \in A$). One can then see that an extension $f \colon A \rightarrow B$ of skew braces is central with respect to the subvariety $\mathsf{BR}$ of braces if and only if its kernel $\Ker(f)$ satisfies the condition $[\Ker(f), A]_{\mathsf{BR}}= 0$, where $[\Ker(f), A]_{\mathsf{BR}}$ is the ideal of $A$ generated by all the commutators of the form $[k,a]= k+a-k-a$, where $k \in \Ker(f), a \in A$. Indeed, by looking at the first part of the proofs of the two implications in Theorem \ref{Central-Radical} one realizes that the condition $[\Ker(f), A]_{\mathsf{BR}}= 0$ is indeed necessary and sufficient for an extension to be central with respect to the adjunction \eqref{Braces}.
Similar results can be obtained by considering the subvarieties $\mathsf{Nil_nSKB}$ and $\mathsf{Sol_nSKB}$ of $\mathsf{SKB}$, where $\mathsf{Nil_nSKB}$ denotes the variety of skew left braces of $n$-nilpotent type and $\mathsf{Sol_kSKB}$ the variety of skew braces of $n$-solvable type ($n \ge 1$ is a positive integer). In the group case the characterizations of all these types of relative central extensions were established in \cite{EGVdL} (Section $9$). We shall not pursue this further in the present article, but it would be interesting to describe the corresponding relative commutators of skew braces on the model of what was done in \cite{RelComm} in the category of groups.
\end{remark}

\section{The subvariety of abelian groups}
\label{Section-Ab}

\begin{definition}
    Let $A$ be a skew brace, we denote by $A'$ the subgroup of $A_+$ generated by the elements $a*b=-a+a\circ b -b$ and $a+b-a-b$ for all $a,b\in A$. 
\end{definition}

A direct calculation shows that $A'$ is an ideal of $A$ and it is the smallest ideal $I$ such that the quotient $A/I$ is an abelian group; see \cite{MR4697968}.
The reflector $F: \mathsf{SKB}\to \mathsf{Ab}$ in the adjunction 
\begin{equation}\label{adjunction-Ab}
\begin{tikzcd}
	{\mathsf{Ab}} & \perp & {\textsf{SKB}}
	\arrow["{U}"', shift right=3, from=1-1, to=1-3]
	\arrow["F"', shift right=3, from=1-3, to=1-1]
\end{tikzcd}
% \xymatrix@=20pt{ {\mathsf{Ab} \, } \ar@<-1ex>[r]_-{U}^-{\perp} & {\, \mathsf{SKB} \, }
% \ar@<-2ex>[l]_-{F}}
\end{equation}
sends the skew brace $A$ to the quotient $F(A) = A/A'$ of $A$ by the ideal $A'$ (with obvious definition on morphisms). From the categorical point of view, the reflector $F$ in this adjunction gives the ``abelianisation functor'', since the (internal) abelian objects in the variety of skew braces are precisely the skew braces for which the two group structures coincide and are abelian (\cite{MR4561474}). Note that in the literature these skew braces are also referred to as \emph{trivial} braces.

\begin{definition}\label{RelativeCentr}
    Let $A$ be a skew brace, and let $I\subset A$ be an ideal of $A$. We define the relative commutator $[I,A]_{\mathsf{Ab}}$ to be the additive subgroup generated by the set
    \[\lbrace [i,b]_+,\: i*b,\: [i,b]_\circ\mid  i\in I,  b\in A\rbrace.\]
\end{definition}
\begin{proposition}\label{Relative-Ab}
    Let $A$ be a skew brace, and $I\subset A$ an ideal of $A$. Then, $[I,A]_{\mathsf{Ab}}$ is an ideal of $A$. 
\end{proposition}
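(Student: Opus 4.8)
The plan is to verify directly the three defining conditions for $J=[I,A]_{\mathsf{Ab}}$ to be an ideal: that $J$ is a normal subgroup of $A_+$, that $J*A\subseteq J$, and that $A*J\subseteq J$. Throughout I will use the standard fact that, since $I$ is an ideal, it is a normal subgroup of both $A_+$ and $A_\circ$ and satisfies $\lambda_a(I)=I$ for all $a\in A$ (equivalently, $A/I$ is a skew brace and the projection $A\to A/I$ is a homomorphism with kernel $I$). First I would record that $J\subseteq I$: the generator $[i,b]_+$ lies in $I$ because $I$ is normal in $A_+$, the generator $i*b$ lies in $I$ because $I$ is an ideal, and $[i,b]_\circ=i\circ b\circ i'\circ b'$ lies in $I$ because $I$ is normal in $A_\circ$.

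Given $J\subseteq I$, two of the three conditions are immediate. Normality of $J$ in $A_+$ follows from the presence of the additive commutators among the generators: for $j\in J\subseteq I$ and $a\in A$ the element $[j,a]_+=j+a-j-a$ is itself a generator, so $a+j-a=-[j,a]_++j\in J$. The inclusion $J*A\subseteq J$ is equally direct, since for $j\in J\subseteq I$ and $a\in A$ the element $j*a$ is a generator of the form $i*b$.

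The substance of the proof is the inclusion $A*J\subseteq J$. Using the identity $a*(x+y)=(a*x)+x+(a*y)-x$ together with the normality of $J$ already established, one checks that $\{x\in A:a*x\in J\}$ is a subgroup of $A_+$; since it contains all generators of $J$ it will contain $J$, so it suffices to prove $a*g\in J$ for $a\in A$ and $g$ one of the three families of generators. For $g=[i,b]_+$ and $g=i*b$ I would argue through $\lambda_a$: since $a*g=\lambda_a(g)-g$ and $g\in J$, it is enough to check $\lambda_a(g)\in J$. Here $\lambda_a([i,b]_+)=[\lambda_a(i),\lambda_a(b)]_+$ because $\lambda_a\in\Aut(A_+)$, while $\lambda_a(i*b)=(a\circ i\circ a')*\lambda_a(b)$ because $\lambda\colon A_\circ\to\Aut(A_+)$ is a homomorphism, so that $\lambda_a\lambda_i=\lambda_{a\circ i\circ a'}\lambda_a$; both results are again generators, as $\lambda_a(i)\in I$ and $a\circ i\circ a'\in I$.

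The main obstacle is the multiplicative commutator $g=[i,b]_\circ$, for which $\lambda_a$ does not distribute over $\circ$ and so the previous argument breaks down. To handle it I would pass to the quotient group $Q=A_+/J$, which is well defined by the normality of $J$. Two facts hold in $Q$: the image of $I$ is central (because $[i,b]_+\in J$ for all $i\in I,b\in A$), and $\overline{i\circ x}=\overline{i}+\overline{x}$ for all $i\in I$, $x\in A$ (because $i*x\in J$). For an arbitrary $x\in I$ one then writes $a\circ x=(a\circ x\circ a')\circ a$ with $a\circ x\circ a'\in I$ by normality of $I$ in $A_\circ$, and, using the second fact on the left factor and the centrality of $\overline{a\circ x\circ a'}$ to cancel the outer $\mp\overline{a}$, obtains
\[
\overline{a*x}=\overline{a\circ x\circ a'}-\overline{x}.
\]
Applying this to $x=[i,b]_\circ$ and observing that the $\circ$-conjugate $a\circ[i,b]_\circ\circ a'=[a\circ i\circ a',\,a\circ b\circ a']_\circ$ is again a generator of $J$ (using $a\circ i\circ a'\in I$), both $\overline{a\circ[i,b]_\circ\circ a'}$ and $\overline{[i,b]_\circ}$ vanish in $Q$, whence $a*[i,b]_\circ\in J$. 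This settles the last family of generators and completes the verification that $A*J\subseteq J$, so that $J$ is an ideal.
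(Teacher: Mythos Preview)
Your proof is correct. The reductions $J\subseteq I$, normality of $J$ in $A_+$, and $J*A\subseteq J$ are handled just as in the paper. Your verification that $S_a=\{x:a*x\in J\}$ is an additive subgroup is sound (for inverses, the identity $a*(x+(-x))=a*0=0$ gives $a*(-x)=-x-(a*x)+x\in J$), and your $\lambda_a$-computations for the first two families of generators, together with the quotient argument for $[i,b]_\circ$, are all valid.

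The paper proceeds differently for $A*J\subseteq J$: instead of reducing to generators, it argues uniformly for an arbitrary $k\in J$. Working in $A_+/J_+$ and using that the image of $I$ is central there, it rewrites $\overline{b*k}=-\overline{k}+\overline{b\circ k}-\overline{b}$, then uses $[k,b]_\circ\in J$ (together with the relation $i*x\in J$ for $i\in I$) to obtain $\overline{b\circ k}=\overline{k\circ b}$, whence $\overline{b*k}=\overline{k*b}=0$. This is shorter because it avoids the case split and the $\lambda_a$-stability calculations. Your argument for the generator $[i,b]_\circ$ is essentially a variant of this idea (you derive $\overline{a*x}=\overline{a\circ x\circ a'}-\overline{x}$ for $x\in I$ and then observe that $\circ$-conjugation sends the generator to another generator), but you do not exploit that the same quotient formula already disposes of \emph{all} $k\in J$ at once. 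The upshot: both arguments rely on the same structural facts (centrality of $\overline{I}$ in $A_+/J_+$ and the presence of $i*b$ and $[i,b]_\circ$ among the generators), but the paper packages them into a single computation, whereas you split into three cases, two of which are handled by a separate $\lambda_a$-stability mechanism that the paper never needs.
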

\begin{proof}
   Since $J=[I,A]_{\mathsf{Ab}}\subset I$, $J$ contains the commutator subgroup $[J_+,A_+]$ so $J_+$ is normal in $A_+$. In addition, $J*A\subset J$. To see that $A*J\subset J$, it is enough to show that for $k\in J$ and $b\in A$, $b*k\in J$. Since $-b+b\circ k\in I$, in $A_+/J_+$ one has 
   \[
   \overline{-b+b\circ k}=\overline{b} +\overline{-b+b\circ k}-\overline{b}=\overline{b\circ k-b}.
   \]
   Therefore,
    \[\overline{b*k}=-\overline{k}+\overline{b\circ k}-\overline{b}=-\overline{k}+\overline{k\circ b}-\overline{b}  =  \overline{ k*b} = 0.\qedhere
    \]
\end{proof}
\begin{proposition}
\label{Pro:CentralAbelian}
    An extension $(A,f)$ is central with respect to the adjunction \eqref{adjunction-Ab} if and only if $[\Ker(f),A]_{\mathsf{Ab}}=0$.
\end{proposition}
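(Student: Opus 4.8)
The plan is to mirror the strategy already used for Proposition \ref{Pro:CenterGroup} and Theorem \ref{Central-Radical}, exploiting that for the reflector $F\colon\mathsf{SKB}\to\mathsf{Ab}$ one has $R(X)=X'$, the ideal generated by the elements $x*y$ and $[x,y]_+$. The one extra feature I would record at the outset is that, since $X/X'$ is a trivial brace (an abelian group with $+=\circ$), every circle commutator $[x,y]_\circ$ also lies in $X'=R(X)$. Throughout I use that $0$ is the common identity of both operations, so that $0*b=[0,b]_+=[0,b]_\circ=0$.

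For the direct implication, assume $(A,f)$ is central, i.e.\ $R(s)=R(t)$ on $R(A\times_B A)$ with $s,t$ as in \eqref{central-ext}. Given $i\in\Ker(f)$ and $b\in A$, the pairs $(i,0)$ and $(b,b)$ lie in $A\times_B A$, and I would compute the three elements $[(i,0),(b,b)]_+=([i,b]_+,0)$, $(i,0)*(b,b)=(i*b,0)$ and $[(i,0),(b,b)]_\circ=([i,b]_\circ,0)$, the second coordinate being $0$ in each case by the remark above. Each of these elements belongs to $R(A\times_B A)$: the first two by the definition of $(A\times_B A)'$, and the third because circle commutators vanish in the trivial-brace quotient. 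Applying the equality $R(s)=R(t)$ to each then forces $[i,b]_+=i*b=[i,b]_\circ=0$. Since these are precisely the generators of $[\Ker(f),A]_{\mathsf{Ab}}$ (Definition \ref{RelativeCentr}), this relative commutator vanishes.

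For the converse, assume $[\Ker(f),A]_{\mathsf{Ab}}=0$. First I would extract its three consequences, valid for all $k\in\Ker(f)$ and $b\in A$: from $[k,b]_+=0$ that $\Ker(f)\subseteq Z(A_+)$; from $k*b=0$ that $k\circ b=k+b$; and from $[k,b]_\circ=0$ that $b\circ k=k\circ b$, whence $b\circ k=b+k$ for every $b\in A$, and in particular $a\circ k=a+k$. Because $R(A\times_B A)=(A\times_B A)'$ is generated as an additive group by the elements $p*q$ and $[p,q]_+$, and $R(s),R(t)$ are additive homomorphisms, it suffices to check that each such generator has equal first and second coordinates. Writing $p=(a,a+k)$ and $q=(a_1,a_1+k_1)$ with $k,k_1\in\Ker(f)$, centrality of $k,k_1$ in $A_+$ gives $[a+k,a_1+k_1]_+=[a,a_1]_+$ at once, so that generator is settled.

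The main obstacle is the surviving identity $(a+k)*(a_1+k_1)=a*a_1$. Here I would use $a+k=a\circ k$ together with $x\circ k=x+k$ and the compatibility condition. Associativity of $\circ$ gives $(a+k)\circ a_1=(a\circ k)\circ a_1=a\circ(k+a_1)$, and expanding by compatibility and cancelling the central term $k$ yields $(a+k)\circ a_1=k+a\circ a_1$; a further application of compatibility gives $(a+k)\circ(a_1+k_1)=(a+k)\circ a_1+k_1=k+a\circ a_1+k_1$. Substituting into $(a+k)*(a_1+k_1)=-(a+k)+(a+k)\circ(a_1+k_1)-(a_1+k_1)$ and cancelling the central elements $k,k_1$ leaves exactly $-a+a\circ a_1-a_1=a*a_1$. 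Thus $R(s)$ and $R(t)$ agree on every generator of $R(A\times_B A)$, so $R(s)=R(t)$ and the extension is central.
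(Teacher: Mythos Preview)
Your proof is correct and follows essentially the same two-step strategy as the paper: in the forward direction, feed pairs $(i,0),(b,b)$ into generators of $R(A\times_B A)$ and apply $R(s)=R(t)$; in the converse, show the two projections agree on a set of additive generators of $(A\times_B A)'$.

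The differences are mainly cosmetic. For the converse the paper writes elements as $(a,b),(a_1,b_1)$ and invokes the computation from Proposition~\ref{Pro:CenterGroup} to get $a*a_1=b*b_1$, whereas you parametrise elements as $(a,a+k),(a_1,a_1+k_1)$ and recompute $(a+k)*(a_1+k_1)=a*a_1$ directly from the identities $k\circ b=k+b=b+k=b\circ k$; both arguments are equally short. You also correctly note that since $(A\times_B A)'$ is \emph{additively} generated by the elements $p*q$ and $[p,q]_+$, there is no need to verify separately that the two projections agree on circle commutators --- the paper does check this, but it is redundant once the additive generators are handled.
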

\begin{proof}
    Assume first that $(A,f)$ is a central extension. Take $A \times_B A$ as in \eqref{central-ext}. Let $a\in \Ker(f)$ and $b\in A$. Then $(a,0)\in A \times_B A$ and
    \[
    (a,0)*(b,b) = (a*b,0)\in {R(A \times_ B A)}.
    \]
    Since $(A,f)$ is central, by definition $a*b=R(s)(a*b,0) = R(t)(a*b,0) = 0$. The first part of the proof of Theorem \ref{Central-Radical}, where it is shown that $[a,b]_+=0$, still applies here. The proof that $[a,b]_\circ=0$ is also similar, and is left to the reader.

    Conversely, let $(a,b)$ and $(a_1,b_1)$ be two elements in $A \times_B A$. As we saw in Proposition \ref{Pro:CenterGroup}, one has $a*a_1=b*b_1$. Then,
    \begin{align*}
        [a,a_1]_+-[b,b_1]_+ &=a+a_1-a-a_1+b_1+b-b_1-b\\
        &=a+a_1-a+b-a_1-b\\
        &=a+a_1-a_1-a+b-b\\
        &=0.
    \end{align*}
    Similarly, one has that $[a,a_1]_\circ\circ [b,b_1]_\circ' =0$, which implies that the restrictions $R(s)$ and $R(t)$ of the first and the second projections $s$ and $t$ (with the notations from diagram \eqref{central-ext}) are equal, as desired.
\end{proof}

One can ask whether the commutator $[I,A]_{\mathsf{Ab}}$ defined above coincides with the Huq commutator of normal subobjects
defined in \cite{Huq}. We shall now see that this is indeed the case.

Given an ideal $I$ of a skew brace $A$,
consider the set-theoretic map $c \colon I \times A \rightarrow A$ defined by $c(i,a) = i+a$ for any $i\in I$ and $a \in A$.
 The Huq commutator $[I,A]_{\mathsf{Huq}}$  is the smallest ideal $J$ of $A$
such that the composite map \begin{equation}\label{quotient}  
\begin{tikzcd}
	{I\times A} & A & {A/J}
	\arrow["c", from=1-1, to=1-2]
	\arrow["\pi", from=1-2, to=1-3]
\end{tikzcd}
%\xymatrix{ I \times A \ar[r]^-c & A \ar[r]^\pi  & A/J}
\end{equation}  
is a \emph{homomorphism} of skew braces (here $\pi$ is the canonical quotient defined by $\pi(a)=\overline{a}$).
\begin{proposition}\label{Central-Huq}
 For any ideal $I$ of a skew brace $A$, 
 $$[I,A ]_{\mathsf{Huq}}= [I,A ]_{\mathsf{Ab}}.$$
\end{proposition}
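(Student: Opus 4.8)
The plan is to reduce everything to a single equivalence: for an ideal $J$ of $A$, the composite $\pi c \colon I\times A \to A/J$, $(i,a)\mapsto \overline{i+a}$, is a homomorphism of skew braces \emph{if and only if} $[I,A]_{\mathsf{Ab}}\subseteq J$. Granting this, both inclusions follow immediately. Applying the equivalence to $J=[I,A]_{\mathsf{Ab}}$, which is an ideal by Proposition~\ref{Relative-Ab}, shows that $\pi c$ is a homomorphism, so minimality of the Huq commutator gives $[I,A]_{\mathsf{Huq}}\subseteq [I,A]_{\mathsf{Ab}}$; applying it to $J=[I,A]_{\mathsf{Huq}}$, for which $\pi c$ is a homomorphism by definition, gives $[I,A]_{\mathsf{Ab}}\subseteq [I,A]_{\mathsf{Huq}}$. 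Equality results.

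\textbf{Forcing the generators (necessity).}
First I would treat the operation $+$. Writing $\pi c\big((i_1,a_1)+(i_2,a_2)\big)=\overline{i_1+i_2+a_1+a_2}$ and $\pi c(i_1,a_1)+\pi c(i_2,a_2)=\overline{i_1+a_1+i_2+a_2}$, and cancelling $\overline{i_1}$ on the left and $\overline{a_2}$ on the right, the additive homomorphism condition becomes $\overline{i+b}=\overline{b+i}$ for all $i\in I$, $b\in A$, i.e. $[i,b]_+\in J$. This extracts the first family of generators and shows it is forced. For the other two I would specialise the $\circ$-homomorphism identity $\overline{(i_1\circ i_2)+(a_1\circ a_2)}=\overline{(i_1+a_1)\circ(i_2+a_2)}$. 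Taking $(i_1,a_1,i_2,a_2)=(i,0,0,b)$ gives $\overline{i+b}=\overline{i\circ b}$; since $i\circ b=i+(i*b)+b$, this says the conjugate $i+(i*b)-i$ lies in $J$, whence $i*b\in J$ by normality of $J$. Taking instead $(0,b,i,0)$ gives $\overline{b\circ i}=\overline{i+b}=\overline{i\circ b}$, i.e. $[i,b]_\circ\in J$. Thus a homomorphism forces all three generating families into $J$, so $[I,A]_{\mathsf{Ab}}\subseteq J$.

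\textbf{Sufficiency and the main obstacle.}
Conversely, assuming $[I,A]_{\mathsf{Ab}}\subseteq J$, I would verify both homomorphism conditions. The additive one is immediate from $[i,b]_+\in J$. The multiplicative one is the real work, and the main obstacle, because it mixes the two group laws. Here the organising identity is again $x\circ y=x+(x*y)+y$: since $i*b\in J$ for $i\in I$ and $J$ is additively normal, one gets $\overline{i+a}=\overline{i\circ a}$ for every $i\in I$, $a\in A$, so that $\overline{(i_1+a_1)\circ(i_2+a_2)}=\overline{i_1\circ a_1\circ i_2\circ a_2}$ computed in the group $(A/J)_\circ$. Using $[i_2,a_1]_\circ\in J$ to commute $\overline{a_1}$ past $\overline{i_2}$, then regrouping into $\overline{(i_1\circ i_2)\circ(a_1\circ a_2)}$, and finally using $(i_1\circ i_2)*(a_1\circ a_2)\in I*A\subseteq J$ to pass from $\circ$ back to $+$, collapses the expression to $\overline{(i_1\circ i_2)+(a_1\circ a_2)}$, as required. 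The delicate point throughout is that one must know the image of $I$ in $A/J$ is central in both group structures and that $\circ$ and $+$ agree on $I$-by-$A$ products modulo $J$; the identity $x\circ y=x+(x*y)+y$ together with the normality of $J$ is exactly what legitimises each passage. The genuinely routine verifications (that $I$ is closed under $\circ$, and that conjugates of elements of $J$ remain in $J$) I would only mention in passing.
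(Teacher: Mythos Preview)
Your proof is correct and follows essentially the same approach as the paper: both directions rest on analysing when the map $(i,a)\mapsto \overline{i+a}$ is a skew brace homomorphism, and both extract the generators $[i,a]_+$, $i*a$, $[i,a]_\circ$ from the homomorphism identities applied to the pairs $(i,0)$ and $(0,a)$. Your framing via a single equivalence (``$\pi c$ is a homomorphism iff $[I,A]_{\mathsf{Ab}}\subseteq J$'') is a clean organisational device; the paper does the two inclusions separately but with the same content, and in fact leaves the sufficiency direction (that the map into $A/[I,A]_{\mathsf{Ab}}$ really is a homomorphism) as a one-line claim, whereas you spell out the key manipulation via $x\circ y = x + x*y + y$.
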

\begin{proof}
    By definition of $[I,A ]_{\mathsf{Ab}}$, the set-theoretic map 
    % https://q.uiver.app/#q=WzAsMyxbMCwwLCJJXFx0aW1lcyBBIl0sWzEsMCwiQSJdLFsyLDAsIkEvW0ksQV1fe1xcdGV4dHNme2FifX0iXSxbMCwxXSxbMSwyXV0=
\[\begin{tikzcd}[column sep=scriptsize]
	{I\times A} & A & {A/[I,A]_{\textsf{ab}}}
	\arrow[from=1-1, to=1-2]
	\arrow[from=1-2, to=1-3]
\end{tikzcd}\]
    %$$\xymatrix{I \times A \ar[r] &  A \ar[r] & A/[I,A ]_{\mathsf{Ab}}}$$ 
    defined by $$\phi(i,a) = \overline{i} + \overline{a}$$ is a  homomorphism of skew braces, so that $[I,A ]_{\mathsf{Huq}} \subset [I,A ]_{\mathsf{Ab}}.$ For the  other inclusion, consider any quotient $\pi \colon A \rightarrow A/J$ having the property that the composite map $\pi c$ in \eqref{quotient} is a homomorphism of skew braces. For any $i \in I, a\in A$,
   $$\overline{i} \circ \overline{a} = \pi  c(i,0) \circ \pi c (0,a) = \pi c(i,a) = \pi c (0,a) \circ \pi c (i,0) = \overline{a} \circ \overline{i}, $$
   hence $\overline{[i,a]_{\circ}} = \overline{0}$.
   Similarly, one checks that $\overline{[i,a]_{+}} = \overline{0}$, and  $\overline{i*a}=\overline{0}$, so that
   all the generators appearing in the Definition \ref{RelativeCentr} of $[I,A ]_{\mathsf{Ab}}$ must be sent to $\overline{0}$ by $\pi$. It follows that $[I,A ]_{\mathsf{Ab}} \subset J$, hence in particular $[I,A ]_{\mathsf{Ab}} \subset [I,A ]_{\mathsf{Huq}}$, as desired.
\end{proof}
\begin{remark}
A different and equivalent description of the Huq commutator of two ideals in the category $\mathsf{SKB}$ was given in  \cite{MR4561474}, where the Huq commutator was also shown to coincide with the Smith commutator. 
\end{remark}

\section{Hopf formulae for homology}

The characterization of the central extensions obtained in the previous sections will now provide some new Hopf formulae for the homology of skew braces. Indeed, the variety $\mathsf{SKB}$ of skew braces is a variety of $\Omega$-groups and the subcategories $\mathsf{Grp}$, $\mathsf{RadRng}$, $\mathsf{BR}$ and $\mathsf{Ab}$ are all subvarieties so that the methods of \cite{Frolich, Lue, FC} apply, as also the recent and more general ones developed in the semi-abelian context \cite{EVdL, EGVdL, EG}.

The way one defines the (comonadic) homology of an algebra $B$ in a semi-abelian variety $\mathbb C$ relatively to a subvariety $\mathbb X$ of $\mathbb C$ 
\begin{equation}
\label{GeneralAdj}
% https://q.uiver.app/#q=WzAsMyxbMCwwLCJcXG1hdGhiYntYfSJdLFsyLDAsIlxcbWF0aGJie0N9Il0sWzEsMCwiXFxwZXJwIl0sWzEsMCwiRiIsMix7Im9mZnNldCI6M31dLFswLDEsIlUiLDIseyJvZmZzZXQiOjN9XV0=
\begin{tikzcd}[column sep=scriptsize]
	{\mathbb{X}} & \perp & {\mathbb{C}}
	\arrow["U"', shift right=3, from=1-1, to=1-3]
	\arrow["F"', shift right=3, from=1-3, to=1-1]
\end{tikzcd}
% \xymatrix@=20pt{ {\mathbb X \, } \ar@<-1ex>[r]_-{U}^-{\perp} & {\, \mathbb C \, }
% \ar@<-2ex>[l]_-{F}}
\end{equation}
can be briefly explained as follows (we refer the reader to \cite{EVdL, EGVdL} for more details).
The forgetful functor sending the algebra $B$ to its underlying set $\mid B \mid$ has a left adjoint, the ``free algebra functor'', naturally inducing a comonad 
\[
({\mathbb G} \colon \mathbb C \rightarrow \mathbb C, \epsilon \colon  G \Rightarrow 1_{\mathbb C}, \delta \colon  G \rightarrow G^2)
\]
on $\mathbb C$. The axioms of a comonad allows one to build a simplicial object ${\mathbb G}(B)$ in $\mathbb C$ \cite{BB}, with the standard ``free presentation'' of $B$ being given by $\epsilon_B \colon G(B) \rightarrow B$.
The ``homology algebras'' $\mathsf{H}_i(B,F)$ of $B$ (with coefficients in the reflector $F \colon \mathbb C \rightarrow \mathbb X$ to the subvariety $\mathbb X$) are the ones of the chain complex $N(F({\mathbb G}(B))$ obtained from the simplicial object $F({\mathbb G}(B))$ in $\mathbb X$ via the ``Moore normalization'' functor associating a chain complex  with any simplicial object in $\mathbb X$.

In the special case of the reflector $F \colon \mathsf{SKB} \rightarrow \mathsf{Grp}$, 
consider a ``free'' presentation 
\[
\begin{tikzcd}
	0 & K & F &B & 0
	\arrow[from=1-1, to=1-2]
	\arrow[tail, from=1-2, to=1-3]
	\arrow["f", two heads, from=1-3, to=1-4]
	\arrow[from=1-4, to=1-5]
\end{tikzcd} 
\]
%$$ \xymatrix{0 \ar[r] &{K \, } \ar@{>->}[r] & F \ar@{->>}[r]^f & B \ar[r] & 0 }$$
of a skew brace $B$, where $F = {\mathbb G}(B)$ is the ``free'' skew brace on the underlying set of $B$. %(see %\cite{Orza} for an explicit %description of the ``free'' skew %brace).
Consider also the ideal $[F,F]_{\mathsf{Grp}} = F*F$ of $F$ as defined in Section \ref{Section-Grp} and the ideal $$[K,F]_{\mathsf{Grp}} = \langle \lbrace a*b, \quad  b*a,\quad c+b*a-c\, \mid \, a\in K,b \in A, c\in A \rbrace  \rangle_{F}.$$
Then the first homology group $\mathsf{H}_1(B,F)$ of the skew brace $B$ (with coefficients in $F$) is given by $$\mathsf{H}_1 (B,F) = \frac{B}{[B,B]_{\mathsf{Grp}}},$$
while the
second homology group of $B$ is given by the Hopf formula 
$$\mathsf{H}_2 (B,F) \cong \frac{K \cap [F,F]_{\mathsf{Grp}}}{[K,F]_{\mathsf{Grp}}}.$$
In particular, this latter is an \emph{invariant} of the skew brace $B$, since it does not depend on the choice of the free presentation. 
According to the results in the above mentioned articles on the so-called \emph{$5$-term exact sequence}, also referred to as the  \emph{Stallings--Stammbach sequence}, we then get the following:
\begin{theorem}\label{SS}
Any short exact sequence 
\[
\begin{tikzcd}
	0 & K & A & B & 0
	\arrow[from=1-1, to=1-2]
	\arrow[tail, from=1-2, to=1-3]
	\arrow["f", two heads, from=1-3, to=1-4]
	\arrow[from=1-4, to=1-5]
\end{tikzcd} 
\]
%$$\xymatrix{0 \ar[r] &{K \, } \ar@{>->}[r] & A \ar@{->>}[r]^f & B \ar[r] & 0 }$$
in the variety $\mathsf{SKB}$ of skew braces induces the following exact sequence in $\mathsf{Grp}$:
\[
% https://q.uiver.app/#q=WzAsNyxbMCwwLCIwIl0sWzEsMCwiSF8yKEEsRikiXSxbMiwwLCJIXzIoQixGKSJdLFszLDAsIlxcZnJhY3tLfXtbSyxBXV97XFx0ZXh0c2Z7R3JwfX19Il0sWzQsMCwiSF8xKEEsRikiXSxbNSwwLCJIXzEoQSxCKSJdLFs2LDAsIjAiXSxbMCwxLCIiLDIseyJzaG9ydGVuIjp7InNvdXJjZSI6MjAsInRhcmdldCI6MjB9fV0sWzEsMiwiIiwyLHsic2hvcnRlbiI6eyJzb3VyY2UiOjIwLCJ0YXJnZXQiOjIwfX1dLFsyLDMsIiIsMix7InNob3J0ZW4iOnsic291cmNlIjoyMCwidGFyZ2V0IjoyMH19XSxbMyw0LCIiLDIseyJzaG9ydGVuIjp7InNvdXJjZSI6MjAsInRhcmdldCI6MjB9fV0sWzQsNSwiIiwyLHsic2hvcnRlbiI6eyJzb3VyY2UiOjIwLCJ0YXJnZXQiOjIwfX1dLFs1LDYsIiIsMix7InNob3J0ZW4iOnsic291cmNlIjoyMCwidGFyZ2V0IjoyMH19XV0=
\begin{tikzcd}[column sep=scriptsize]
	{\mathsf{H}_2(A,F)} & {\mathsf{H}_2(B,F)} & {\frac{K}{[K,A]_{\textsf{Grp}}}} & {\mathsf{H}_1(A,F)} & {\mathsf{H}_1(B,F)} & 0.
	\arrow["\mathsf{H}_2(f)", two heads, from=1-1, to=1-2]
	\arrow[from=1-2, to=1-3]
	\arrow[from=1-3, to=1-4]
	\arrow["\mathsf{H}_1(f)", two heads, from=1-4, to=1-5]
	\arrow[from=1-5, to=1-6]
\end{tikzcd}
\]
% $$\xymatrix{{\mathsf H}_2 (A,F) \ar@{->>}[r]^-{\mathsf{H}_2(f)} & {\mathsf H}_2 (B,F) \ar[r] & \frac{K}{[K, A]_{\mathsf{Grp}}} \ar[r] & {\mathsf H}_1(A,F)  \ar@{->>}[r]^-{\mathsf{H}_1(f)}  & {\mathsf H}_1 (B,F) \ar[r] & 0.}$$
\end{theorem}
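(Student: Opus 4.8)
The plan is to obtain Theorem \ref{SS} from the general Stallings--Stammbach (five-term) exact sequence that is available in any semi-abelian variety relative to a Birkhoff subcategory, as established in \cite{EVdL, EGVdL}, and then to make the middle term fully explicit by invoking the characterisation of central extensions proved in Proposition \ref{Pro:CenterGroup}. Since $\mathsf{SKB}$ is a semi-abelian variety and $\mathsf{Grp}$ is a Birkhoff subcategory of $\mathsf{SKB}$ (being a subvariety), with reflector $F$ as in \eqref{adjunction-groups}, the abstract theory applies verbatim.

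First I would quote the general result of \cite{EVdL, EGVdL}: it associates, with the given short exact sequence $0 \to K \to A \to B \to 0$, the exact sequence
\[
\begin{tikzcd}[column sep=scriptsize]
	{\mathsf{H}_2(A,F)} & {\mathsf{H}_2(B,F)} & {\frac{K}{[K,A]_{\mathcal{X}}}} & {\mathsf{H}_1(A,F)} & {\mathsf{H}_1(B,F)} & 0,
	\arrow[from=1-1, to=1-2]
	\arrow[from=1-2, to=1-3]
	\arrow[from=1-3, to=1-4]
	\arrow[from=1-4, to=1-5]
	\arrow[from=1-5, to=1-6]
\end{tikzcd}
\]
where $[K,A]_{\mathcal{X}}$ (with $\mathcal{X}=\mathsf{Grp}$) denotes the \emph{categorical} relative commutator produced by the abstract machinery, and the outer maps are the functorial images $\mathsf{H}_i(f)$. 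Thus the whole content reduces to identifying this abstract object with the explicitly generated ideal $[K,A]_{\mathsf{Grp}}$ introduced in Section \ref{Section-Grp}.

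For this identification I would use the defining universal property of the relative commutator: $[K,A]_{\mathcal{X}}$ is the smallest ideal $L \subseteq K$ of $A$ such that the induced extension $A/L \to A/K \cong B$ is central with respect to \eqref{adjunction-groups}. The kernel of this extension is $K/L$, so Proposition \ref{Pro:CenterGroup} says that it is central precisely when $[K/L, A/L]_{\mathsf{Grp}} = 0$ in $A/L$. Since the canonical quotient $A \to A/L$ sends the generators $a*b$, $b*a$, $c + b*a - c$ (with $a \in K$, $b,c \in A$) of $[K,A]_{\mathsf{Grp}}$ to the corresponding generators of $[K/L, A/L]_{\mathsf{Grp}}$, this vanishing is equivalent to the inclusion $[K,A]_{\mathsf{Grp}} \subseteq L$. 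Hence the smallest admissible $L$ is $L = [K,A]_{\mathsf{Grp}}$ itself, which gives $[K,A]_{\mathcal{X}} = [K,A]_{\mathsf{Grp}}$.

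Finally I would note that the two outer terms have already been identified just before the statement, namely $\mathsf{H}_1(A,F) = A/[A,A]_{\mathsf{Grp}} = A/(A*A)$ and likewise for $B$, and that every displayed object indeed lies in $\mathsf{Grp}$; for instance $K/[K,A]_{\mathsf{Grp}}$ is a group because $K*K \subseteq [K,A]_{\mathsf{Grp}}$. Substituting $[K,A]_{\mathsf{Grp}}$ for $[K,A]_{\mathcal{X}}$ in the sequence above then yields exactly the asserted exact sequence in $\mathsf{Grp}$. I expect the identification of the abstract relative commutator with the concrete generators to be the only non-formal step; it is precisely what Proposition \ref{Pro:CenterGroup} supplies, while exactness itself is a direct transcription of the general semi-abelian theorem and requires no further skew-brace-specific argument.
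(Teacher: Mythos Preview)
Your proposal is correct and follows exactly the route the paper takes: the paper does not give a separate proof of Theorem \ref{SS} but simply derives it from the general Stallings--Stammbach sequence of \cite{EVdL, EGVdL}, after the relative commutator has been made explicit via Proposition \ref{Pro:CenterGroup}. Your write-up is in fact more detailed than the paper's, since you spell out why the abstract categorical commutator agrees with the explicitly generated ideal $[K,A]_{\mathsf{Grp}}$, a step the paper leaves implicit.
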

The same kind of results can be stated by replacing $[F,F]_{\mathsf{Grp}} = F*F$ with $[F,F]_{\mathsf{RadRng}}$ and $[K,F]_{\mathsf{Grp}}$ with $[K,F]_{\mathsf{RadRng}}$, so that the second homology radical ring 
${\mathsf H}_2(B, G)$ of $B$ (with coefficients in the reflector $G \colon \mathsf{SKB} \rightarrow \mathsf{RadRng}$) is given~by
$$\mathsf{H}_2 (B,G) \cong \frac{K \cap [F,F]_{\mathsf{RadRng}}}{[K,F]_{\mathsf{RadRng}}}.$$

In case one chooses the subvariety $\mathsf{BR}$ of braces, one gets a new Hopf formula for the second homology of a skew brace
$$\mathsf{H}_2 (B,\mathsf{br}) \cong \frac{K \cap [F,F]_{\mathsf{BR}}}{[K,F]_{\mathsf{BR}}},$$
with coefficient functor $\mathsf{br} \colon \mathsf{SKB} \rightarrow \mathsf{BR}$, where the relative commutators $[F,F]_{\mathsf{BR}}$ and $[K,F]_{\mathsf{BR}}$ are described in Remark \ref{RemBraces}.

%$$\mathsf{H}_2 (B) \cong \frac{K \cap [F,F]_{\mathsf{Ab}}}{[K,F]_{\mathsf{Ab}}}.$$

From this, as shown in \cite{EG}, one can also deduce a result concerning the lower central series. We explain here only the case of the subvariety $\mathsf{Ab}$ of abelian groups, however the same method can be applied to the subvarieties $\mathsf{Grp}$ of groups and $\mathsf{RadRng}$ of radical rings. One defines $A^0 =A$, and the ($n+1$)th term of the series inductively by $A^{n+1} = [A^n, A]_{\mathsf{Ab}}$, for any $n \ge 1$. Note that each $A^{n+1} = [A^n, A]_{\mathsf{Ab}}$ is an ideal of $A$ by Proposition \ref{Relative-Ab}. Note that these are central series 

\begin{corollary}\label{series}
Given a morphism $f \colon A \rightarrow B$ in $\mathsf{SKB}$, assume that $$\mathsf{H}_1(f) \colon \mathsf{H}_1(A,\mathsf{ab} ) \rightarrow \mathsf{H}_1(B,\mathsf{ab})$$ is an isomorphism and $$\mathsf{H}_2(f) \colon \mathsf{H}_2(A,\mathsf{ab} ) \rightarrow \mathsf{H}_2(B,\mathsf{ab})$$ is surjective. Then, for any $n\ge 1$, the induced morphism $\frac{A}{A^n} \rightarrow \frac{B}{B^n}$ is an isomorphism.
\end{corollary}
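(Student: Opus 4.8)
The plan is to prove this as a consequence of the Stallings–Stammbach five-term exact sequence (Theorem~\ref{SS}, now applied with coefficient functor $\mathsf{ab}$) together with an induction on $n$. The statement is the skew-brace analogue of the classical Stallings theorem on lower central series in group theory, and the natural route is to imitate that argument inside the semi-abelian variety $\mathsf{SKB}$, using the relative commutator $[-,-]_{\mathsf{Ab}}$ and the fact (Proposition~\ref{Relative-Ab}) that each $A^{n}$ is an ideal.

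**Base case and the role of the $5$-term sequence.** First I would handle $n=1$. Here $A^{1} = [A,A]_{\mathsf{Ab}} = A'$, so $A/A^{1} = \mathsf{H}_1(A,\mathsf{ab})$ and similarly for $B$; thus the hypothesis that $\mathsf{H}_1(f)$ is an isomorphism is exactly the statement $A/A^{1} \cong B/B^{1}$. This is the anchor of the induction. For the inductive step I would apply Theorem~\ref{SS} to the short exact sequence $0 \to K \to A \xrightarrow{f} B \to 0$ (with $K = \Ker(f)$), giving the exact sequence
\[
\begin{tikzcd}[column sep=scriptsize]
	{\mathsf{H}_2(A,\mathsf{ab})} & {\mathsf{H}_2(B,\mathsf{ab})} & {\dfrac{K}{[K,A]_{\mathsf{Ab}}}} & {\mathsf{H}_1(A,\mathsf{ab})} & {\mathsf{H}_1(B,\mathsf{ab})} & 0.
	\arrow["{\mathsf{H}_2(f)}", from=1-1, to=1-2]
	\arrow[from=1-2, to=1-3]
	\arrow[from=1-3, to=1-4]
	\arrow["{\mathsf{H}_1(f)}", from=1-4, to=1-5]
	\arrow[from=1-5, to=1-6]
\end{tikzcd}
\]
The surjectivity of $\mathsf{H}_2(f)$ and the injectivity of $\mathsf{H}_1(f)$ force the middle term $K/[K,A]_{\mathsf{Ab}}$ to vanish, i.e.\ $K = [K,A]_{\mathsf{Ab}}$. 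This identity is the key algebraic input: the kernel equals its own relative commutator with $A$.

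**The inductive step.** Assuming $\frac{A}{A^{n}} \cong \frac{B}{B^{n}}$ induced by $f$, I would show $\frac{A}{A^{n+1}} \cong \frac{B}{B^{n+1}}$. The map is induced because $f(A^{n+1}) \subseteq B^{n+1}$ (relative commutators are preserved by homomorphisms), so surjectivity is automatic from surjectivity of $f$. For injectivity, the essential point is to show that the relation $K = [K,A]_{\mathsf{Ab}}$ propagates: one checks, using $K = [K,A]_{\mathsf{Ab}} \subseteq [A,A]_{\mathsf{Ab}} = A^{1}$ and then iterating, that $K \subseteq A^{n}$ for every $n$, and more precisely that $K \cap A^{n} = [K \cap A^{n-1}, A]_{\mathsf{Ab}}$-type absorption holds so that $K$ is swallowed into every term of the series. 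Concretely, an element of $A$ mapping into $B^{n+1}$ under $f$ lies in $A^{n+1} + K$; combining the inductive hypothesis (which gives control modulo $A^{n}$) with $K = [K,A]_{\mathsf{Ab}} \subseteq A^{n+1}$ closes the step. The clean way to organize this is to prove the auxiliary claim that $K \subseteq A^{n}$ for all $n$, which follows by induction from $K = [K,A]_{\mathsf{Ab}}$: if $K \subseteq A^{n}$ then $K = [K,A]_{\mathsf{Ab}} \subseteq [A^{n}, A]_{\mathsf{Ab}} = A^{n+1}$.

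**Main obstacle.** The delicate part is the bookkeeping that turns $K = [K,A]_{\mathsf{Ab}}$ and the inductive isomorphism $A/A^{n} \cong B/B^{n}$ into injectivity of $A/A^{n+1} \to B/B^{n+1}$; one must verify that the kernel of this induced map is precisely $(A^{n+1} + K)/A^{n+1}$ and that this is trivial because $K \subseteq A^{n+1}$. This requires knowing that the series is \emph{central} in the appropriate sense (so that quotienting behaves well with the relative commutator and with the ideal structure from Proposition~\ref{Relative-Ab}), which is exactly the remark made just before the statement. I would therefore expect the heart of the proof to be the verification that $K \subseteq A^{n}$ for all $n$ via the self-commutator identity, with the rest being a diagram chase through the five-term sequence exactly as in the classical Stallings argument adapted to $\mathsf{SKB}$.
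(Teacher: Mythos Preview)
The paper does not give its own proof of this corollary; it simply invokes \cite{EG} together with the five-term exact sequence. Your plan---extracting $K=[K,A]_{\mathsf{Ab}}$ from the Stallings--Stammbach sequence and then iterating to get $K\subseteq A^{n}$ for all $n$---is the standard Stallings argument transported to $\mathsf{SKB}$, and it matches the route the paper is pointing to.

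There is, however, one gap. You assume from the outset that $f$ is surjective, so that $0\to K\to A\xrightarrow{f} B\to 0$ is a short exact sequence and Theorem~\ref{SS} applies. The corollary is stated for an \emph{arbitrary} morphism $f$. For non-surjective $f$ you cannot invoke the five-term sequence on $f$ directly, and the surjectivity of the induced map $A/A^{n+1}\to B/B^{n+1}$ is no longer automatic (your claim that ``surjectivity is automatic from surjectivity of $f$'' fails precisely here). The usual remedy, as in the classical Stallings theorem and in \cite{EG}, is either to factor $f$ through its image and handle the surjection and the inclusion separately, or to argue inductively that the induced maps on the nilpotent quotients $B/B^{n}$ are surjective using the centrality of the series (Propositions~\ref{Pro:CentralAbelian} and~\ref{Central-Huq}): a subobject of a nilpotent object that surjects onto the abelianisation is the whole thing. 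Your proposal captures the essential mechanism correctly in the surjective case, but this reduction step is missing.
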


\subsection*{Acknowledgements}

This work was partially supported by
the project OZR3762 of Vrije Universiteit Brussel, the 
FWO Senior Research Project G004124N,
and the Fonds de la Recherche Scientifique  under Grant CDR No.
J.0080.23. Letourmy is supported by FNRS. 

\bibliographystyle{abbrv}
\bibliography{refs}

\end{document}